\DeclareSymbolFont{rsfscript}{OMS}{rsfs}{m}{n}
\DeclareSymbolFontAlphabet{\mathrsfs}{rsfscript}
\newtheorem{theorem}{Theorem} [section]
\newtheorem{lemma}[theorem]{Lemma}
\newtheorem{corollary}[theorem]{Corollary}
\newtheorem{proposition}[theorem]{Proposition}
\theoremstyle{definition}
\newtheorem{remark}[theorem]{Remark}
\numberwithin{equation}{section}
\def\softl{l\kern-0.3ex\raise0.1ex\hbox{'}\kern-0.3ex}  
\def\circledm{\protect\mathbin{\hbox
    {\protect$\bigcirc$\rlap{\kern-9.2pt\raise0pt\hbox
    {\protect$\mathtt{m}$}}}}}        
\def\malc{\circledm}            
\def\smallcircledm{\protect\mathbin{\hbox
    {\protect$\bigcirc$\rlap{\kern-6.9pt\raise0pt\hbox
    {\protect$\mathtt{m}$}}}}}
\def\rbl{{-\!\!\!-\!\!\!-\!\!\!-\!\!\!\!\rightarrow}}
\newcommand{\FG}{\operatorname{F\mathbf{G}}}
\newcommand{\FH}{\operatorname{F\mathbf{H}}}
\newcommand{\FSL}{\operatorname{F\mathbf{Sl}}}
\newcommand{\BR}{\operatorname{BR}}
\newcommand{\Ed}{\operatorname{Ed}}
\newcommand{\Sub}{\operatorname{Sub}}
\newcommand{\Cay}{\operatorname{Cay}}
\newcommand{\F}{F}
\newcommand{\M}{M}
\newcommand{\Msg}{M_{\mbox{\tiny sg}}}
\renewcommand{\P}{P}
\newcommand{\Q}{Q}
\newcommand{\f}{i}
\newcommand{\T}{\mathbb{T}}
\newcommand{\I}{\mathbb{I}}
\newcommand{\mx}[1]{#1^{\mathfrak{m}}}
\newcommand{\Imx}{\mathbb{I}\mathfrak{m}}
\newcommand{\bfFA}{\mathbf{FI}}
\newcommand{\bfPF}{\mathbf{PF}}
\newcommand{\GX}{\mathcal{G}_X}
\newcommand{\FX}{\mathcal{F}_X}
\newcommand{\eps}{\varepsilon}
\renewcommand{\phi}{\varphi}
\newcommand{\cR}{\mathrsfs{R}}
\newcommand{\cL}{\mathrsfs{L}}
\newcommand{\cD}{\mathrsfs{D}}
\newcommand{\cJ}{\mathrsfs{J}}
\title[$F$-inverse monoids]{$F$-inverse monoids as algebraic structures\\  in enriched signature}
\author{K.~Auinger, G.~Kudryavtseva and M.~B.~Szendrei}
\address{Fakult\"at f\"ur Mathematik, Universit\"at Wien, Oskar-Morgenstern-Platz 1, A-1090 Wien, Austria}
\email{karl.auinger@univie.ac.at}
\address{University of Ljubljana,
Faculty of Civil and Geodetic Engineering, Jamova cesta~2, SI-1000 Ljubljana, Slovenia; Institute of Mathematics, Physics and Mechanics, Jadranska ulica 19, SI-1000 Ljubljana, Slovenia}
\email{ganna.kudryavtseva@fgg.uni-lj.si, ganna.kudryavtseva@imfm.si}
\address{Bolyai Institute, University of Szeged, Aradi v\'ertan\'uk tere 1, H-6720 Szeged, Hungary; Alfr\'ed R\'enyi Institute of Mathematics, Hungarian Academy of Sciences, Re\'altanoda utca 13--15, H-1053 Budapest, Hungary}
\email{m.szendrei@math.u-szeged.hu}
\thanks{The second author was partially supported by ARRS grant P1-0288, the third author by the National Research, Development and Innovation Office, grants K115518 and K128042.}
\begin{document}

\begin{abstract} Every $F$-inverse monoid can be equipped with the unary operation $a\mapsto \mx{a}$ where $\mx{a}$ denotes the maximum element in the $\sigma$-class of $a$. In this enriched signature, the class of all $F$-inverse monoids forms a variety of algebraic structures. We describe universal objects in several classes of $F$-inverse monoids, in particular free $F$-inverse monoids.
More precisely, for every $X$-generated group $G$ we describe the initial object in the category of all $X$-generated $F$-inverse monoids $F$ for which $F/\sigma=G$.

\end{abstract}

\maketitle

\section{Introduction}

The origin of the notion of an $F$-inverse monoid lies in the theory of partially ordered semigroups, and
their class is one of the most studied subclasses of inverse semigroups, see Lawson \cite{Lawson} and Petrich \cite{Petrich}.
Their impact exceeds semigroup theory: they are useful, for instance, in the theories of partial group actions, see
Kellendonk and Lawson \cite{KL}, and of $C^*$-algebras, see Milan and Steinberg \cite{milan, steinberg}, Li and Norling \cite{li} and Starling \cite{starling}.
For undefined notions and unproven facts in context of inverse semigroups and general algebraic structures, the reader is referred to Lawson \cite{Lawson} and Burris--Sankappanavar \cite{Burris}, respectively.

An inverse monoid is $F$-inverse if each class of its minimum group congruence $\sigma$ has a maximum element with respect to the natural partial order.
In this case, the $\sigma$-class containing the identity element consists of all idempotents, and so
an $F$-inverse monoid is necessarily $E$-unitary.
In particular, free inverse monoids are $F$-inverse.
Within inverse semigroups, $F$-inverse monoids appear to be ubiquitous.
On the one hand, every $E$-unitary inverse monoid is embeddable in an $F$-inverse monoid, namely in its permissible hull,
and on the other hand, every inverse monoid has an $F$-inverse cover. The finitary version of the former statement is
an easy consequence since forming the permissible hull preserves finiteness.  The question whether each finite inverse monoid
has a finite $F$-inverse cover is a long-standing open problem.
When considering a possible computational attack on this problem, Kinyon observed that the class of all
$F$-inverse monoids forms a variety if the unary operation $a\mapsto \mx{a}$ where $\mx{a}$ is the maximum element in the $\sigma$-class of $a$ is added to the set of basic operations.
His announcement of this fact in \cite{Kinyon} at the International Conference on Semigroups (Lisbon, 2018) stimulated the
authors to find transparent models of free $F$-inverse monoids, from which the work on the present paper began.

Throughout the paper (from Section \ref{sect:F-inverse monoids} on), $F$-inverse monoids are understood in this extended signature.
The main result of the paper (Theorem \ref{thm:univF}) is proved in Section \ref{sect:F}.
For any $X$-generated group $G$, we consider an $F$-inverse monoid $\F(G)$ defined in a way extending the definition of
the Margolis--Meakin expansion $\M(G)$ of $G$.
Namely, $\F(G)$ consists of all pairs $(\Gamma,g)$ where
$\Gamma$ is a finite subgraph of the Cayley graph $\Cay(G)$ of $G$ having $1$ and $g$ as vertices and multiplication is defined by the same rule as in $\M(G)$.
We prove that $\F(G)$ is universal among the $X$-generated $F$-inverse monoids $F$ where $F/\sigma$ is a morphic image of
$G$.
More precisely, if there is a canonical morphism $\nu\colon G\to F/\sigma$ then there exists a canonical morphism
$\phi\colon \F(G)\to F$ such that the diagram
\begin{center}
 \begin{tikzcd}
 \F(G) \arrow[r, "\phi"] \arrow[d] & F \arrow[d]\\
 G\arrow[r, "\nu"] & F/\sigma
 \end{tikzcd}
\end{center}
commutes.
As a consequence, we obtain that the free $F$-inverse monoid on $X$ is $\F(G)$ with $G$ being the free group on $X$.
In the language of categories, the main result says that the functor $\F\colon G\mapsto \F(G)$ from the category of $X$-generated groups to the category of $X$-generated $F$-inverse monoids is a left adjoint to the functor $\sigma\colon F\mapsto F/\sigma$ going the other way around.
In particular, $\F(G)$ is an initial object in the category of all $X$-generated $F$-inverse monoids $F$ with $F/\sigma=G$.

In Section \ref{sect:perF} we consider the class of perfect $F$-inverse monoids which are defined by the property that
$\sigma$ is a perfect congruence, that is, the set product of any two $\sigma$-classes is an entire $\sigma$-class.
The collection of all perfect $F$-inverse monoids forms a subvariety and is defined, within the variety of $F$-inverse monoids, by the identity $\mx{x}\mx{(x^{-1})}=1$.
We prove the analogue of the main result within this subclass.
For an $X$-generated group $G$, the universal object $\P(G)$ turns out to be the semidirect product
$\FSL_{G\times X}\rtimes G$ of the free semilattice monoid on $G\times X$ by $G$.
Obviously, this monoid depends only on the group $G$ and the cardinality of $X$, but not on $G$ {\sl as an $X$-generated group.} This is in marked contrast to the case of $\F(G)$ which depends on the geometry of the Cayley graph of $G$, shown in Section \ref{sect:F}.

In Section \ref{sect:F-inverse monoids} we show that the class of all $F$-inverse monoids forms a variety in the enriched signature, provide an identity basis for it and introduce a `convenient' term algebra for this context.
Section \ref{sect:Preliminaries} contains prerequisites.

\section{Preliminaries} \label{sect:Preliminaries}

Here we collect prerequisites concerning inverse monoids, $X$-generated algebraic structures, Cayley graphs of groups, and expansions of groups.

\subsection{Inverse monoids}\label{subsec:inverse monoids}
For an inverse semigroup $S$, $\sigma$ denotes the {\em minimum group congruence} on $S$.
By $E=E(S)$ we denote the semilattice of idempotents of $S$.
An inverse semigroup is {\em $E$-unitary} if $\sigma$ is idempotent pure, that  is, for  any $e\in E$ we have $E\sigma=e\sigma=E$ (no non-idempotent relates to an idempotent under $\sigma$).
By an {\em $F$-inverse monoid} we  mean an inverse monoid where each $\sigma$-class contains a maximum element.
The maximum element of the $\sigma$-class $s\sigma=g\in S/\sigma$ is denoted either $\mx{s}$ or $m_g$  (depending on the point of view), and these elements are referred to as the \emph{max-elements} of $S$.
It is well known that each $F$-inverse monoid is $E$-unitary.

A mapping $\psi\colon G\to S$ from a group $G$ to an inverse monoid $S$ is called a {\em premorphism} if it satisfies $1\psi=1$, $a^{-1}\psi=(a\psi)^{-1}$ and $(ab)\psi \geq a\psi\cdot b\psi$ for any $a,b\in S$.
The notion of a premorphism from a group to an inverse monoid traces back to \cite{MR}, and its connection with partial group actions is studied in \cite{KL}. They are called prehomomorphisms in \cite{Petrich} and dual prehomomorphisms in \cite{Lawson}.
Notice that each premorphism $\psi$ from a group $G$ to an inverse monoid $S$ is {\em strong} in the sense of
Hollings \cite{Hollings}, that is, it satisfies
\begin{equation}\label{def.strong}
g\psi\cdot h\psi=(gh)\psi\cdot (h\psi)^{-1}\cdot h\psi= g\psi\cdot (g\psi)^{-1}\cdot (gh)\psi\quad\hbox{for any}
\quad g,h\in G,
\end{equation}
see \cite[Proposition 2.1, Lemma 2.2]{KL}.
For every $F$-inverse monoid $S$, the mapping $\psi_S\colon S/\sigma\to S$ defined by $g\mapsto m_g$ is a premorphism called
{\em the premorphism induced by $S$}.

\subsection{Categories of $X$-generated algebraic structures}$\,\!$
In this subsection, all algebras are assumed to be of the same algebraic type $\tau$ with a nullary operation.
Let $X$ be a set; an algebra $A$ together with a (not necessarily injective) mapping $\f_A\colon X\to A$ is \emph{$X$-generated} provided that $A$ is generated by $X\f_A$. A \emph{morphism $\psi\colon A\to B$ of $X$-generated algebras} $A$ and $B$ is a morphism of algebras which is \emph{respecting the generators}, that is, satisfies $\f_A\psi=\f_B$.
Such a morphism is called \emph{canonical}. Every canonical morphism is surjective and there is at most one such morphism from any $A$ to any $B$. The class of all $X$-generated algebras naturally forms a category with at most one morphism between any pair of its objects. This category admits an initial object, namely the $X$-generated term algebra $\T_X$ (of type $\tau$). The isomorphism classes of the objects of this category are in bijective correspondence with the congruences $\theta$ on $\T_X$: every quotient algebra $\T_X/\theta$ is an $X$-generated algebra in an obvious way. Moreover, $\T_X/\theta$ is an initial object of the subcategory formed by all algebras which are quotients of $\T_X/\theta$. This is in particular the case when $\theta=\theta(\mathbf{V})$, the fully invariant congruence on $\T_X$ corresponding to some variety $\mathbf{V}$ of  algebras (of type $\tau$).

Throughout, when considering an $X$-generated algebra $A$, the assignment mapping $\f_A$ will be assumed to be understood and will not be explicitly mentioned, except in certain cases when this seems to be important. Given a term $w\in \T_X$ and an $X$-generated algebra $A$, the \emph{value of $w$ in $A$}, that is, the image of $w$ under the canonical morphism $\T_X\to A$ will be denoted by $[w]_A$; for letters $x\in X$ we shall also use the  notation $x_A$ instead of $[x]_A=x\f_A$.

When considering inverse monoids (in particular, groups), $\T_X$ is usually replaced by
the \emph{free monoid with involution} $\I_X$ on $X$. A model of $\I_X$ can be given as follows. Let
$(X\sqcup X^{-1})^*$ be
the free monoid on
$X\sqcup X^{-1}$ where $X^{-1}=\{x^{-1}\colon x\in X\}$ is a (disjoint)
copy of $X$ consisting of formal inverses of the elements of $X$. Setting $(x^{-1})^{-1}=x$ for every $x\in X$, we  get an involution (a bijection of order two)
$X\sqcup X^{-1}\to X\sqcup X^{-1}$,
denoted $y\mapsto y^{-1}$ for any
$y\in X\sqcup X^{-1}$, which can be extended to an
involution on $(X\sqcup X^{-1})^*$  by setting
$1^{-1}=1$ and
$(u_1\cdots u_n)^{-1}=u_n^{-1}\cdots u_1^{-1}$ for any
$n\in \mathbb{N}$
and
$u_i\in X\sqcup X^{-1}$.

\subsection{Cayley graphs of groups}
Let $G$ be an $X$-generated group.
The {\em Cayley graph $\Cay(G)$ of $G$} is the oriented graph $\Cay(G):=V\sqcup E^+\sqcup E^-$ with set of vertices $V=G$, set of positive edges $E^+=G\times X$ and set of negative edges $E^-=G\times X^{-1}$; for the edge $(g,y)\in E:=E^+\cup E^-$ we set $\alpha(g,y)=g$ and $(g,y)\omega=gy_G$ which denote the \emph{initial vertex} and the \emph{terminal vertex} of the edge $(g,y)$, respectively, and $\ell(g,y)=y$ which is called the \emph{label} of  $(g,y)$. Moreover, we have the involution ${}^{-1}\colon E\to E$, defined by $(g,y)^{-1}=(gy_G,y^{-1})$. The edge $(g,y)$ should be thought of as  $\underset{g}{\bullet}\!\overset{y}{\rbl}\!\!\!\underset{gy_G}{\bullet}$; intuitively, the inverse $(g,y)^{-1}$ is `the same edge, but traversed in the opposite direction'. Obviously, the Cayley graph $\Cay(G)$ depends on the assignment mapping
$\f\colon X\to G$; usually this mapping is fixed and assumed to be understood. If necessary, it will be mentioned as here: $\Cay(G,\f)$.

A \emph{non-empty path} in $\Cay(G)$ is a sequence $e_1e_2\cdots e_n$  ($n\ge 1$) of
edges for which $e_i\omega=\alpha e_{i+1}$ for all $i\in \{1,\dots, n-1\}$. For the path $p=e_1\cdots e_n$ we set $\alpha p=\alpha e_1$, $p\omega =e_n\omega$ (initial and terminal vertices of $p$) and $p^{-1}=e_n^{-1}\cdots e_1^{-1}$ (inverse of $p$). We also consider, for each vertex $g$, the \emph{empty path at $g$}, denoted $\varepsilon_g$, for which we set $\alpha\varepsilon_g=g=\varepsilon_g\omega$.
For vertices $g,h\in G$, the path $p$ is a \emph{$(g,h)$-path} if $\alpha p=g$ and $p\omega=h$; paths having the same initial and terminal vertices are called \emph{co-terminal}. The collection of all paths in $\Cay(G)$ forms a (small) category, the \emph{free category} $\Cay(G)^*$ generated by the graph $\Cay(G)$ whose set of objects is the set of vertices $G$ of $\Cay(G)$ while for any $g,h\in G$, the set of all arrows $g\to h$ is comprised of all $(g,h)$-paths, and composition of arrows is the obvious composition of composable paths.

The \emph{label} of the path $p=e_1\cdots e_n$ is defined by $\ell(p)=\ell(e_1)\cdots \ell(e_n)\in \I_X$ while $\ell(\varepsilon_g)=1$ for all vertices $g$; it is immediate that $\ell(p^{-1})= \ell(p)^{-1}$ for every path $p$. Every pair $(w,g)\in \I_X\times G$  admits a unique path in $\Cay(G)$, denoted $p_g(w)$, with $\alpha p_g(w)=g$ and $\ell(p_g(w))=w$; we set $p(w):=p_1(w)$. For the terminal vertex of that path we have $p_g(w)\omega=g[w]_G$, and in particular, $p(w)\omega=[w]_G$.

A \emph{subgraph} $\Gamma$ of $\Cay(G)$ is any subset of $\Cay(G)$ which is closed under $\alpha, \omega$ and $ {}^{-1}$. In particular, every subset $K\subseteq \Cay(G)$ admits a unique subgraph $\left<K\right>$ \emph{spanned} by $K$ which is finite for finite $K$.  The \emph{subgraph spanned by a
path $p$}, denoted $\left<p\right>$, is the subgraph spanned by its edge set
provided $p$ is non-empty, and consists of the vertex $g$ if $p=\varepsilon_g$.
Note that $p$, $p^{-1}$ and $pp^{-1}p$ span the same subgraph. A subgraph $\Gamma$ is \emph{connected} if for any two vertices $u,v\in \Gamma$ there exists a $(u,v)$-path $p$ which runs entirely in $\Gamma$, that is $\left<p\right>\subseteq \Gamma$.

The union $\Gamma\cup \Gamma'$ of two (finite) subgraphs $\Gamma$ and $\Gamma'$ of $\Cay(G)$ is a (finite) subgraph, hence the set of all (finite) subgraphs of $\Cay(G)$ forms a semilattice under union, and so does the set of all (finite) subgraphs  containing the vertex $1$. Left multiplication of the (finite) subgraphs of $\Cay(G)$ by elements of $G$ provides an action of $G$ on the semilattice of (finite) subgraphs of $\Cay(G)$, denoted $(g,\Gamma)\mapsto g\Gamma$.

\subsection{The Margolis--Meakin and the Birget--Rhodes expansions}
Let $G$ be an $X$-generated group and let $\Cay(G)$ be its Cayley graph.
The {\em Margolis--Meakin expansion} $\M(G)$ of $G$ is defined in \cite{MM} as follows.
The elements of $\M(G)$ are the pairs $(\Gamma,g)$ where $\Gamma$ is a finite connected subgraph of $\Cay(G)$ containing $1$  and  $g$  as vertices, and the multiplication on $\M(G)$ is given by
\begin{equation*}
(\Gamma,g)(\Gamma',g') = (\Gamma  \cup g\Gamma', gg').
\end{equation*}
For any $w\in \I_X$, denote by $\Gamma_w$ the subgraph $\left<p(w)\right>$ of $\Cay(G)$ spanned by $p(w)$.
In particular, $\Gamma_1$ has one vertex, $1$, and has no edges, and $\Gamma_x\ (x\in X)$ is the subgraph
whose vertices are $1$ and $x_G$ (which may coincide), and whose only edges are $(1,x)$ and $(1,x)^{-1}=(x_G,x^{-1})$.

\begin{theorem}[\cite{MM}] \label{th:MM}
Let $G$ be an $X$-generated group.
\begin{enumerate}
\item \label{th:MM1} Subject to the mapping $X\to \M(G)$ defined by $x\mapsto (\Gamma_x,x_G)$,
$\M(G)$ is an $X$-generated inverse monoid with  identity element $(\Gamma_1, 1)$ and inverse unary operation $$(\Gamma,g)^{-1}=(g^{-1}\Gamma, g^{-1}).$$
The value  of $w\in \I_X$ in $\M(G)$ is given by $$[w]_{\M(G)}=(\Gamma_w,[w]_G).$$
\item
The idempotents of $\M(G)$ are precisely the elements $(\Gamma,1)\in \M(G)$. Therefore $E(\M(G))$ is isomorphic to the semilattice of all finite connected subgraphs of $\Cay(G)$ containing $1$ as a vertex.
\item
For every $(\Gamma,g),(\Gamma',g')\in \M(G)$, we have $(\Gamma,g)\leq (\Gamma',g')$ if and only if $g=g'$ and
$\Gamma'$ is a subgraph of $\Gamma$.
\item \label{th:MM4}
The projection $\M(G)\to G$ given by $(\Gamma, g) \mapsto g$ is an idempotent pure canonical morphism onto the maximum group quotient $G$ of $\M(G)$. In particular, the inverse monoid $\M(G)$ is $E$-unitary.
\item The Margolis--Meakin expansion $\M(\FG_X)$ of the free $X$-generated group $\FG_X$ is a free $X$-generated inverse monoid.
\item \label{th:MM.univ}
Let $S$ be an $X$-generated $E$-unitary inverse monoid for which there is a canonical morphism $\nu\colon G\to S/\sigma$.
Then there is a canonical morphism $\phi\colon \M(G) \to S$ such that the diagram (of canonical morphisms) \begin{center}
 \begin{tikzcd}
 \M(G) \arrow[r, "\phi"] \arrow[d] & S \arrow[d]\\
 G\arrow[r, "\nu"] & S/\sigma
 \end{tikzcd}
\end{center}
commutes.
\end{enumerate}
\end{theorem}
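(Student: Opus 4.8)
The plan is to dispatch items (i)--(iv) by routine verification, deduce freeness of $\M(\FG_X)$ formally from the universal property, and concentrate the real effort on \ref{th:MM.univ}. For (i)--(iii): one checks that $\Gamma\cup g\Gamma'$ is again a finite connected subgraph containing $1$ and $gg'$ (connectedness because $\Gamma$ and $g\Gamma'$ share the vertex $g$), so the multiplication is well defined; associativity and the identity laws $(\Gamma_1,1)(\Gamma,g)=(\Gamma,g)=(\Gamma,g)(\Gamma_1,1)$ are direct; $(g^{-1}\Gamma,g^{-1})$ is readily seen to be an inverse of $(\Gamma,g)$; and $(\Gamma,g)$ is idempotent iff $g=1$, so idempotents commute and $\M(G)$ is an inverse monoid. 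The semilattice isomorphism $E(\M(G))\cong\{\text{finite connected subgraphs of }\Cay(G)\text{ through }1\}$ and the description of $\le$ then follow from $a\le b\iff a=eb$ with $e\in E$. For the $X$-generated assertion I would prove $[w]_{\M(G)}=(\Gamma_w,[w]_G)$ by induction on $|w|$, using $p(uv)=p(u)\cdot[u]_G p(v)$ and hence $\Gamma_{uv}=\Gamma_u\cup[u]_G\Gamma_v$; surjectivity of $\I_X\to\M(G)$ is then immediate, since for any $(\Gamma,g)$ a closed walk at $1$ traversing every edge of the finite connected graph $\Gamma$, followed by a path in $\Gamma$ from $1$ to $g$, has a label $w$ with $\Gamma_w=\Gamma$ and $[w]_G=g$.

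For (iv), the projection $(\Gamma,g)\mapsto g$ is visibly a surjective canonical morphism onto $G$, and its kernel is idempotent pure because the preimage of $1$ is $E(\M(G))$. This kernel equals $\sigma$: it contains $\sigma$ (being a group congruence), while conversely two elements with the same second coordinate share the lower bound $(\Gamma\cup\Gamma',g)$ and are therefore $\sigma$-related (in any inverse semigroup $c\le a$ forces $(c,a)\in\sigma$). Hence $\M(G)/\sigma=G$ and $\M(G)$ is $E$-unitary.

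For the universal property \ref{th:MM.univ}: by (i) we may write $\M(G)=\I_X/\rho$, where $(u,v)\in\rho\iff\Gamma_u=\Gamma_v$ and $[u]_G=[v]_G$; writing also $S=\I_X/\rho_S$, a (necessarily surjective) canonical morphism $\phi\colon\M(G)\to S$ exists exactly when $\rho\subseteq\rho_S$, and whenever it exists the square commutes automatically because $\M(G)\to S/\sigma$ is the unique canonical morphism. So everything reduces to the implication
\[ \Gamma_u=\Gamma_v \text{ and } [u]_G=[v]_G \ \Longrightarrow\ [u]_S=[v]_S. \]
To prove it I would apply McAlister's $P$-theorem to the $E$-unitary monoid $S$, realising $S=P(H,\mathcal{X},\mathcal{Y})$ with $H=S/\sigma$, $E(S)=\mathcal{Y}$ a meet-semilattice and order ideal in $\mathcal{X}$, and product $(A,h)(B,k)=(A\wedge hB,\,hk)$. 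Writing $[y]_S=(B_y,y_H)$ for $y\in X\sqcup X^{-1}$ (with $y_H$ the image of $y$ in $H$), one has $B_{y^{-1}}=y_H^{-1}B_y$; assign to each edge $e$ of $\Cay(G)$ the element $\kappa(e):=\nu(\alpha e)\cdot B_{\ell(e)}\in\mathcal{X}$, and check $\kappa(e)=\kappa(e^{-1})$ using $\nu(y_G)=y_H$. Expanding $[w]_S$ for $w=y_1\cdots y_n$ by the product rule gives $[w]_S=\bigl(\kappa(e_1)\wedge\cdots\wedge\kappa(e_n),\,\nu([w]_G)\bigr)$ where $p(w)=e_1\cdots e_n$; since every partial meet $\kappa(e_1)\wedge\cdots\wedge\kappa(e_i)$ lies in $\mathcal{Y}$ (being the first coordinate of $[y_1\cdots y_i]_S\in P(H,\mathcal{X},\mathcal{Y})$), the first coordinate of $[w]_S$ is the largest element of $\mathcal{Y}$ bounded above by each $\kappa(e_i)$. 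As $\kappa$ is constant on geometric edges and $e_1,\dots,e_n$ run over the edges of $\Gamma_w$, this largest element depends only on $\Gamma_w$, which yields the implication and hence $\phi$. Finally, (v) follows by applying \ref{th:MM.univ} to the free inverse monoid $S$ on $X$ (which is $E$-unitary) with $\nu=\mathrm{id}_{\FG_X}$: this produces a canonical morphism $\M(\FG_X)\to S$, freeness of $S$ produces one back, and the two composites are canonical endomorphisms, hence identities.

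The main obstacle is the displayed implication in \ref{th:MM.univ}; its content is finding an invariant of a word that is (a) well defined on geometric edges and (b) recovers the idempotent component of $[w]_S$ as a meet manifestly determined by $\Gamma_w$. The $P$-representation makes this transparent, but a more hands-on argument inside $S$ --- fixing a spanning tree of $\Gamma_w$ and re-routing the access paths one edge at a time --- is also feasible and would be the alternative.
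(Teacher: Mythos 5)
Your proposal is correct, but it takes a different route from the one this paper indicates for the only non-routine part. The paper itself states Theorem \ref{th:MM} without proof, citing \cite{MM}; the only ingredient it later revisits is the universal property \eqref{th:MM.univ}, which it obtains in Remark \ref{F-MM-Sz} as a corollary of Theorem \ref{thm:univF} (embed the $E$-unitary monoid $S$ into its permissible hull $C(S)$, take the $F$-inverse submonoid generated by $S\cup\mx{S}$, and restrict the canonical morphism $\F(G)\to F$ to $\M(G)\to S$), and for which it also notes a short direct syntactic proof: restrict the inductive argument of Lemma \ref{lem:journey} to words in $(X\sqcup X^{-1})^*$, i.e.\ show that co-terminal paths $p(s),p(t)$ in $\Cay(G)$ with $\left<p(s)\right>\supseteq\left<p(t)\right>$ force $[s]_S\le[t]_S$, and apply this in both directions. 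You instead reduce \eqref{th:MM.univ} to the implication $\Gamma_u=\Gamma_v,\ [u]_G=[v]_G\Rightarrow[u]_S=[v]_S$ and prove it through McAlister's $P$-theorem, assigning to each geometric edge $e$ the invariant $\kappa(e)=\nu(\alpha e)\cdot B_{\ell(e)}$ and checking $\kappa(e)=\kappa(e^{-1})$ via $\nu(y_G)=y_H$, so that the idempotent coordinate of $[w]_S$ is the meet of the $\kappa(e_i)$ over the edges of $p(w)$ and hence depends only on $\Gamma_w$; this is a sound argument (the partial meets lie in the order ideal $E(S)$, so the full meet is the greatest lower bound and depends only on the edge \emph{set}), and it makes the dependence on $\Gamma_w$ completely explicit, at the price of invoking the structure theorem for $E$-unitary inverse semigroups, whereas the paper's route is purely syntactic/inductive and needs no structure theory (or, in Remark \ref{F-MM-Sz}, leverages the stronger $F$-inverse result already proved). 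Your handling of items (1)--(4), the deduction of (5) from (6) plus freeness of the free inverse monoid (which is $E$-unitary, as the paper recalls), and the observation that commutativity of the square is automatic because canonical morphisms between $X$-generated structures are unique, all match the standard verifications and are fine.
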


The Margolis--Meakin expansion $\M(G)$ of the $X$-generated group $G$  is the inverse monoid version of a special case of a general type of expansions (called \emph{Cayley expansions}) which were studied by Elston \cite{elston} and which also appear in the construction of free objects in semidirect product varieties of semigroups and monoids (see Almeida \cite[Section 10]{almeida:book}).

From the definition it seems to be clear that the monoid $\M(G)$ depends (up to isomorphism) not only on the mere group $G$ but rather on $G$ as an $X${-generated group}. This seems to be folklore but the authors did not find in the literature any precise statement --- let alone proof ---  of such an assertion. So, we briefly discuss the question of when for two ${{X}}$-generated groups $G_1$ and $G_2$ we do have $\M(G_1)\cong \M(G_2)$ {as inverse monoids} (rather than as $X$-generated inverse monoids). First of all, if $\M(G_1)\cong \M(G_2)$ as inverse monoids then $G_1=\M(G_1)/\sigma\cong\M(G_2)/\sigma=G_2$ as groups, that is, we may assume that the underlying abstract group is the same in both cases. So, let $G$ be a group, $X_1,X_2$ be sets and $\f_k\colon X_k\to G$ be mappings such that $X_k\f_k$ generates $G$ for $k=1,2$ and denote the resulting $X_k$-generated group by $G_k$.

\begin{proposition}\label{prop:M(G)dependsonCay(G)}
Let $X_1,X_2$, $\f_1,\f_2$ and $G_1, G_2$ be as above; then $\M(G_1)\cong \M(G_2)$ as inverse monoids if and only if there exists a bijection $\beta\colon X_1\to X_2$ such that the Cayley graphs $\Cay(G,\f_1)$ and $\Cay(G,\beta\f_2)$ are isomorphic as $X_1$-labeled \textsl{undirected} graphs.
\end{proposition}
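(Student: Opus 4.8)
The plan is to prove both implications, with the substantive work concentrated on extracting a Cayley-graph isomorphism from an abstract inverse monoid isomorphism $\Phi\colon \M(G_1)\to\M(G_2)$. The key observation is that the relevant structural ingredients of $\M(G_k)$ are all definable in the language of inverse monoids, so they are preserved by $\Phi$: the semilattice of idempotents $E(\M(G_k))$, which by Theorem \ref{th:MM} is isomorphic to the semilattice of finite connected subgraphs of $\Cay(G,\f_k)$ containing $1$, the group quotient $\M(G_k)/\sigma\cong G$, and the natural partial order. So $\Phi$ restricts to a semilattice isomorphism $E(\M(G_1))\to E(\M(G_2))$ and induces a group automorphism $\bar\Phi$ of $G$.

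First I would handle the easy direction. Given $\beta\colon X_1\to X_2$ with $\Cay(G,\f_1)\cong\Cay(G,\beta\f_2)$ as $X_1$-labelled undirected graphs, one checks that the induced bijection on vertices is a group automorphism $\gamma$ of $G$ (it sends $1$ to $1$ and respects the edge relations, which encode right multiplication by generators), and that $(\Gamma,g)\mapsto(\gamma\Gamma,\gamma g)$ is a well-defined inverse monoid isomorphism $\M(G_1)\to\M(G_2)$, since the multiplication rule $(\Gamma,g)(\Gamma',g')=(\Gamma\cup g\Gamma',gg')$ is preserved by any graph automorphism that is simultaneously a group automorphism. For the converse direction, the heart of the argument is to identify, inside the semilattice $E(\M(G_k))$ viewed order-theoretically, the subgraphs of the form $\Gamma_x$ ($x\in X_k$) together with the single-vertex subgraphs $\{g\}$; then the images under the semilattice isomorphism of the one-edge subgraphs recover the edge set of $\Cay(G,\f_2)$ from that of $\Cay(G,\f_1)$, and compatibility with the group automorphism $\bar\Phi$ forces this correspondence to come from a relabelling bijection $\beta\colon X_1\to X_2$. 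Concretely, in the semilattice of finite connected subgraphs containing $1$ ordered by reverse inclusion, the atoms (minimal nontrivial elements) above the bottom element are exactly the two-vertex, one-geometric-edge subgraphs through $1$ — but one must be careful, as $\M(G)$'s idempotent semilattice consists of subgraphs containing $1$, so to see \emph{all} one-edge subgraphs one translates by group elements using the $G$-action, which is visible in $\M(G)$ via conjugation/the $\cD$-relation. I would argue that a one-edge subgraph at an arbitrary vertex $g$ is detected as $g\cdot\{$atom at $1\}$, and that $\Phi$ intertwines the $G$-actions via $\bar\Phi$, so the atom structure is transported faithfully. Recording the labels: two one-edge subgraphs $\langle(g,x)\rangle$ and $\langle(h,y)\rangle$ have the same underlying undirected labelled edge iff $\{g,gx_G\}=\{h,hy_G\}$ and the labels match as unordered pairs, a condition expressible via the semilattice order and the $G$-action; transporting this across $\Phi$ yields the label bijection $\beta$, and unravelling shows $\Cay(G,\f_1)\cong\Cay(G,\beta\f_2)$ as $X_1$-labelled undirected graphs.

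The main obstacle I anticipate is the bookkeeping around \emph{undirected} versus oriented edges and around the asymmetry that $E(\M(G))$ only sees subgraphs through the base vertex $1$. One must verify carefully that the $G$-action on the idempotent semilattice is reconstructible purely from the inverse monoid structure (so that $\Phi$ really does commute with it through $\bar\Phi$) — this is where I would invoke that $g\Gamma$ can be obtained as $p\Gamma p^{-1}$ for any element $p$ of $\M(G)$ with $p\sigma=g$, which translates conjugation in the monoid into the action on subgraphs. A second, subtler point is ruling out that two distinct generators of $X_1$ map to the same edge of $\Cay(G,\f_2)$: this cannot happen because in $\I_{X_1}$ the terms $x$ for distinct $x$ give distinct one-edge idempotent-supports, and $\Phi$ being injective preserves this distinctness — but one should state it explicitly, since a priori the abstract graph could have fewer geometric edges than $|X_1|$ if some generator equals $1$ or if $x_G=y_G^{-1}$ for $x\ne y$; handling these degenerate coincidences is exactly what forces the statement to be about \emph{labelled} graphs rather than bare graphs, and is the reason the bijection $\beta$ on label sets must be carried along.
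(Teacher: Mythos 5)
Your plan is genuinely different from the paper's (the paper works with the $\cJ$-order, not with the idempotent semilattice plus a recovered $G$-action), but as written it has two concrete gaps, one in each direction. The device on which your ``only if'' direction rests is false: for $p=(\Theta,g)$ and an idempotent $e=(\Gamma,1)$ one computes $pep^{-1}=(\Theta\cup g\Gamma,\,1)$, not $(g\Gamma,1)$; worse, $(g\Gamma,1)$ is in general not an element of $\M(G)$ at all, since $g\Gamma$ need not contain the vertex $1$ --- equivalently, $E(\M(G))$ is not invariant under the left $G$-action. So conjugation does not produce the translated one-edge subgraphs, and the step ``$\Phi$ intertwines the $G$-actions via $\bar\Phi$, so the atom structure is transported faithfully'' has no justification; this is exactly the crux of the hard direction. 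The paper avoids any such reconstruction of edges based at arbitrary vertices: using that $\cJ=\cD$ on $\M(G)$ and that $(\Gamma,g)\mathrel{\cJ}(\Xi,h)$ iff $\Gamma\cong\Xi$ as labeled graphs, the maximal $\cJ$-classes below $J_1$ are precisely the $\cJ$-classes of the generators (singletons for generators with $x\f_k=1$, four-element classes otherwise), so any isomorphism $\phi$ yields a bijection $\beta$ with $x\lambda_1\phi\in\{x\beta\lambda_2,(x\beta\lambda_2)^{-1}\}$; defining $\f_3$ by inverting $\f_2$ on the letters where the inverse occurs, one gets $(G,\f_1)\cong(G,\f_3)$ as $X$-generated groups and hence the desired undirected labeled isomorphism of Cayley graphs, with no need to locate edges away from the base vertex.

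Your ``if'' direction also relies on a false claim: the vertex bijection of an isomorphism of $X$-labeled \emph{undirected} Cayley graphs, even after normalizing it to fix $1$, need not be a group automorphism. For the free group on $\{a,b\}$ with the standard generators, let $\theta$ fix every reduced word not beginning with $ab^{\pm1}$ and send a reduced word $av$ (with $v$ beginning with $b^{\pm1}$) to $av'$, where $v'$ is obtained from $v$ by inverting every occurrence of $b$; this swaps the two subtrees attached to the vertex $a$ by its two $b$-labeled edges, is a label-preserving automorphism of the undirected Cayley graph fixing $1$, yet $\theta(ab)=ab^{-1}\ne ab=\theta(a)\theta(b)$. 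Since the multiplication of $\M(G)$ uses left translation, for such a $\theta$ the assignment $(\Gamma,g)\mapsto(\theta\Gamma,\theta g)$ is not multiplicative (because $\theta(g\Gamma')\ne\theta(g)\theta(\Gamma')$ in general), so you cannot feed an arbitrary undirected isomorphism into your formula. The workable route, which the paper takes, is to reduce to the case where (after applying $\beta$) $\f_1$ and $\f_2$ agree on a subset $A\subseteq X$ and satisfy $x\f_2=(x\f_1)^{-1}$ on $B=X\setminus A$; in that situation the isomorphism $\M(G,\f_1)\cong\M(G,\f_2)$ is transparent, and your concluding worry about degenerate coincidences among generators is then handled by the label data rather than by any automorphism claim.
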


\begin{proof}
First recall from \cite[Lemma 3.3]{MM} that Green's relations $\cJ$ and $\cD$ on $\M(G)$ coincide and that $(\Gamma,g)\mathrel{\cJ}(\Xi,h)$ if and only if $\Gamma\cong\Xi$ (where the isomorphism is understood as isomorphism of edge-labeled graphs). It follows that for any $X$-generated group $G$, in $\M(G)$ the $\cJ$-class of $(\Gamma_x,x_G)$ is a singleton if and only if $x_G=1$ (in which case $\Gamma_x$ is generated by a loop-edge) while if $x_G\ne 1$, the $\cJ$-class of $(\Gamma_x,x_G)$ consists of four elements (for the graph component, there are two possible choices, namely $\left<(1,x)\right>$ and $\left<(x_G^{-1},x)\right>$).

We decompose the set $X_k$ as $X_k=Y_k\sqcup Z_k$ where $y\in Y_k$ if and only if $y\f_k\ne 1$ (that is, we have to distinguish between idempotent and non-idempotent generators of $\M(G)$). We denote the assignment mappings $X_k\to \M(G_k)$ by $\lambda_k$. Recall that for any elements $a,b$ of an inverse semigroup, we have that $J_a\le J_b$ if and only if there exists an element $d$ such that $a\mathrel{\cD} d\le b$ \cite[Proposition 3.2.8]{Lawson}.
Using this, one observes that for $k=1,2$, the $\cJ$-classes of the elements $x\lambda_k$ ($x\in X_k$) are maximal elements below $J_1$ in the partially ordered set $\M(G_k)/\cJ$. Moreover, for $z\in Z_k$ the $\cJ$-class is the singleton $J_{z\lambda_k}$   containing the idempotent $z\lambda_k$  while for $y\in Y_k$, $\vert J_{y\lambda_k}\vert=4$ and $y\lambda_k$ is a non-idempotent member of its $\cJ$-class. It follows that every isomorphism $\phi\colon \M(G_1)\to \M(G_2)$ --- since it induces an order isomorphism $\M(G_1)/\cJ\to \M(G_2)/\cJ$ --- also induces a bijection between the $\cJ$-classes $\{J_{z\lambda_1}\colon z\in Z_1\}$ and $\{J_{z\lambda_2}\colon z\in Z_2\}$ and a bijection between the $\cJ$-classes $\{J_{y\lambda_1}\colon y\in Y_1\}$ and $\{J_{y\lambda_2}\colon y\in Y_2\}$. That is, there is a bijection $\gamma\colon Z_1\to Z_2$ satisfying $J_{z\gamma\lambda_2}=J_{z\lambda_1}\phi$ as well as a bijection $\delta\colon Y_1\to Y_2$ with $J_{y\delta\lambda_2}=J_{y\lambda_1}\phi$. Set $\beta=\gamma\cup \delta$ for the bijection $X_1\to X_2$. Altogether, for every $x\in X_1$ we have $x\lambda_1\phi\in\{x\beta\lambda_2, (x\beta\lambda_2)^{-1}\}$. For convenience of notation, we assume from now on that $X_1=X=X_2$ and $\beta=\mathrm{id}_X$. Then we have $x\lambda_1\phi\in \{x\lambda_2, (x\lambda_2)^{-1}\}$ for all $x\in X$. Next let $X=A\sqcup B$ where $x\in A$ if and only if $x\lambda_1\phi =x\lambda_2$ while $x\in B$ otherwise, and note that in the latter case $x\lambda_1\phi=(x\lambda_2)^{-1}$. Then, for $x\in A$ we have
\begin{equation}\label{variablesA}
(\left<(1,x)\right>,x\f_1)\phi =(\left<(1,x)\right>,x\f_2)
\end{equation}
while for $x\in B$ we have
\begin{align}\label{variablesB}
\begin{split}(\left<(1,x)\right>,x\f_1)\phi&=(\left<(1,x)\right>,x\f_2)^{-1}=((x\f_2)^{-1}\left<(1,x)\right>,(x\f_2)^{-1})\\ &= (\langle(1,x^{-1})\rangle,(x\f_2)^{-1}).\end{split}
\end{align}
Let $\f_3\colon X\to G$ be defined by
\[x\f_3=\begin{cases} x\f_2 &\mbox{ if }x\in A\\ (x\f_2)^{-1}&\mbox{ if }x\in B.\end{cases}\]
Equations (\ref{variablesA}) and (\ref{variablesB}) imply that $(G,\f_1)$ and
$(G,\f_3)$ are isomorphic as $X$-generated groups and therefore have isomorphic Cayley graphs. But the $X$-generated groups $(G,\f_2)$ and $(G,\f_3)$ differ from each other only in that the letters $x$ from $B$ are sent to mutually inverse elements. On the Cayley graphs this has the effect that one can obtain $\Cay(G,\f_2)$ from $\Cay(G,\f_3)$ by reversing all arrows which carry a label from $B$. Consequently, since $\Cay(G,\f_1)$ and $\Cay(G,\f_3)$ are isomorphic as $X$-labeled directed graphs, $\Cay(G,\f_1)$ and $\Cay(G,\f_2)$ are isomorphic as $X$-labeled undirected graphs.

For the converse we only have to consider the case where $X$ is partitioned into $A\sqcup B$ and we have two assigment functions $\f_1,\f_2\colon X\to G$ such that $\f_1$ and $\f_2$ agree on $A$ but for every $x\in B$, $x\f_2=(x\f_1)^{-1}$. From the discussion above it is clear that $\M(G,\f_1)\cong\M(G,\f_2)$ as inverse monoids.
\end{proof}

Now let $G$ be an arbitrary group. The model $\BR(G)$ of the {\em Birget--Rhodes prefix \mbox{expansion} $\widetilde{G}^{\mathcal{R}}$} of $G$, due to the third author \cite{Szendrei}, consists of
the set of all pairs $(A,g)$ with $A$ being a finite subset of $G$ such that $1,g\in A$ and the multiplication on it
defined by
\begin{equation*}
(A,g)(B,h)=(A\cup gB, gh).
\end{equation*}

\begin{theorem}[\cite{Szendrei}]\label{th:Sz}
Let $G$ be an arbitrary group.
\begin{enumerate}
\item \label{th:Sz1}
The semigroup $\BR(G)$ is an inverse monoid generated by
$\left\{(\left\{1,g\right\},g): g\in G \right\},$ with identity element  $(\{1\},1)$ and inverse unary operation
$(A,g)^{-1} = (g^{-1}A, g^{-1})$ for every $(A,g)\in \BR(G)$.
\item
The idempotents of $\BR(G)$ are precisely the elements $(A,1)\in \BR(G)$.
Therefore $E(\BR(G))$ is isomorphic to the semilattice of all finite subsets of $G$ containing $1$ with respect to the operation of union of subsets.
\item
For every $(A,g),(B,h)\in \BR(G)$, we have $(A,g)\leq (B,h)$ if and only if $g=h$ and $B\subseteq A$.
\item
The projection $\BR(G)\to G$ defined by $(A,g) \mapsto g$ is an idempotent pure and
surjective morphism onto the maximum group quotient $G$ of $\BR(G)$.
Moreover, $(\{1,g\},g)$ is the maximum element of its $\sigma$-class for every $g\in G$, and thus $\BR(G)$ is an $F$-inverse monoid.
\item \label{th:Sz.univ}
Let $S$ be an $F$-inverse monoid
and $\nu\colon G\to S/\sigma$ be any morphism.
Then there is a unique morphism $\phi\colon \BR(G) \to S$ such that
max-elements are mapped to max-elements and the diagram
\begin{center}
 \begin{tikzcd}
 \BR(G) \arrow[r, "\phi"] \arrow[d] & S \arrow[d]\\
 G\arrow[r, "\nu"] & S/\sigma
 \end{tikzcd}
\end{center}
commutes.
\end{enumerate}
\end{theorem}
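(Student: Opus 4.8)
The plan is to obtain the structural statements (1)--(4) by direct verification on the explicit model $\BR(G)$, and to derive the universal property (5) from the premorphism $\psi_S$ induced by $S$.

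For (1)--(3) I would argue as follows. Associativity of $(A,g)(B,h)=(A\cup gB,gh)$ holds since both bracketings of a triple product equal $(A\cup gB\cup ghC,\,ghk)$; $(\{1\},1)$ is a two-sided identity because $1\in A$ and $g\in A$; and $(g^{-1}A,g^{-1})$ is an inverse of $(A,g)$ since $(A,g)(g^{-1}A,g^{-1})=(A,1)$ and $(A,1)(A,g)=(A,g)$. As the elements $(A,1)$ commute under the product, $\BR(G)$ is an inverse monoid, and since $(\{1,a\},1)=(\{1,a\},a)(\{1,a\},a)^{-1}$ and $(A,g)=\bigl(\prod_{a\in A}(\{1,a\},1)\bigr)(\{1,g\},g)$, it is generated by $\{(\{1,g\},g):g\in G\}$; this gives (1). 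For (2), an element $(A,g)$ is idempotent iff $g^2=g$ iff $g=1$, and $(A,1)\mapsto A$ is the asserted isomorphism onto the semilattice of finite subsets of $G$ containing $1$. For (3), $(A,g)\le(B,h)$ means $(A,g)=(B,h)(C,1)$ for some finite $C\ni1$, and unravelling this shows it is equivalent to $g=h$ and $B\subseteq A$ (take $C=g^{-1}A$, which contains $1$ as $g\in A$).

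For (4), the projection $\pi\colon(A,g)\mapsto g$ is a surjective monoid morphism onto $G$ with $\pi^{-1}(1)=E(\BR(G))$, so $\pi$ is idempotent pure and $\BR(G)$ is $E$-unitary. To see $\ker\pi=\sigma$, note $\sigma\subseteq\ker\pi$ because $\ker\pi$ is a group congruence, while if $\pi(A,g)=\pi(B,g)$ then by (3) both $(A,g)$ and $(B,g)$ lie below $(\{1,g\},g)$ and hence are $\sigma$-related. The same observation shows $(\{1,g\},g)$ is the maximum of its $\sigma$-class, so $\BR(G)$ is $F$-inverse with $\mx{(A,g)}=(\{1,g\},g)$ and $\BR(G)/\sigma\cong G$.

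Turning to (5), given an $F$-inverse monoid $S$ and a morphism $\nu\colon G\to S/\sigma$, I would set $\theta=\nu\psi_S\colon G\to S$, $g\mapsto m_{g\nu}$; being the composite of a morphism with the premorphism $\psi_S$, the map $\theta$ is itself a premorphism, hence strong, so it satisfies the identities \eqref{def.strong}. I then define $\phi\colon\BR(G)\to S$ by $(A,g)\phi=\bigl(\prod_{a\in A}a\theta\,(a\theta)^{-1}\bigr)g\theta$, which is well defined because the idempotents $a\theta(a\theta)^{-1}$ commute. Two of the required properties are immediate: $(\{1,g\},g)\phi=1\theta(1\theta)^{-1}\,g\theta=g\theta=m_{g\nu}$, so $\phi$ maps max-elements to max-elements; and $(A,g)\phi$ maps to $(g\theta)\sigma=(g\nu)\psi_S\sigma=g\nu$ under the quotient $S\to S/\sigma$, so the diagram commutes. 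The substantive point is multiplicativity. For this I would first derive, using strongness, the identity $g\theta\cdot h\theta(h\theta)^{-1}=(gh)\theta\bigl((gh)\theta\bigr)^{-1}\cdot g\theta$ for all $g,h\in G$; iterating it pushes $g\theta$ past the product $\prod_{b\in B}b\theta(b\theta)^{-1}$ and converts it into $\prod_{b\in B}(gb)\theta\bigl((gb)\theta\bigr)^{-1}$. Then $g\theta\cdot h\theta=g\theta(g\theta)^{-1}(gh)\theta$ (strongness again) supplies the trailing factor $(gh)\theta$, while the leftover idempotent $g\theta(g\theta)^{-1}$ is absorbed into $\prod_{a\in A}a\theta(a\theta)^{-1}$ since $g\in A$. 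Comparing the result with $(A\cup gB,gh)\phi$, whose idempotent part is exactly $\prod_{a\in A}a\theta(a\theta)^{-1}\cdot\prod_{b\in B}(gb)\theta\bigl((gb)\theta\bigr)^{-1}$, yields $\bigl((A,g)(B,h)\bigr)\phi=(A,g)\phi\,(B,h)\phi$, and together with $(\{1\},1)\phi=1$ this makes $\phi$ an (inverse) monoid morphism. Finally, uniqueness: any $\phi'$ making the diagram commute and sending max-elements to max-elements must send $(\{1,g\},g)$ to the unique max-element of the $\sigma$-class $g\nu$, i.e.\ to $m_{g\nu}=(\{1,g\},g)\phi$, and since these elements generate $\BR(G)$ by (1), $\phi'=\phi$. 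I expect the only real obstacle to be the bookkeeping in the multiplicativity computation --- sliding $g\theta$ through the commuting idempotents by means of the strongness identities --- everything else being routine.
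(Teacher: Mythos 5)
Your proposal is correct, but it follows a different route from the paper, which in fact gives no self-contained proof of this theorem at all: the statement is quoted from Szendrei's paper \cite{Szendrei}, and the only argument the authors supply is the later observation (Remark \ref{F-MM-Sz}) that the universal property (5) can be \emph{deduced} from their main result, by identifying $\BR(G)$ with the inverse submonoid of $\F(G)$ generated by the max-elements $(\Delta_g,g)$ (Remark \ref{MM-Sz-F}) and restricting the canonical morphism $\F(G)\to F$ of Theorem \ref{thm:univF}; they also note that a direct syntactic proof is obtained by restricting Lemma \ref{lem:journey} to terms in $(\mx{\I_G})^*$. You instead verify (1)--(4) by direct computation on the model and prove (5) classically, via the premorphism $\theta=\nu\psi_S$, $g\mapsto m_{g\nu}$, the explicit formula $(A,g)\phi=\bigl(\prod_{a\in A}a\theta(a\theta)^{-1}\bigr)g\theta$, and the strongness identities \eqref{def.strong}. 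This is essentially the original Birget--Rhodes/Szendrei argument: more elementary and self-contained, at the cost of the bookkeeping in the multiplicativity check, whereas the paper's route gets (5) for free once $\F(G)$ is available, and moreover explains $\BR(G)$ structurally as the ``max-element part'' of $\F(G)$. Your key sliding identity $g\theta\cdot h\theta(h\theta)^{-1}=(gh)\theta\bigl((gh)\theta\bigr)^{-1}g\theta$ does hold --- it follows from strongness together with the inverse-semigroup identity $ab b^{-1}=(ab)(ab)^{-1}a$ and absorption of $g\theta(g\theta)^{-1}$ --- but it deserves the one-line derivation rather than a bare appeal to strongness. One harmless slip: $(\{1,g\},g)\phi$ equals $1\theta(1\theta)^{-1}\,g\theta(g\theta)^{-1}\,g\theta$, not $1\theta(1\theta)^{-1}\,g\theta$ as written; the omitted idempotent factor is absorbed, so the conclusion $(\{1,g\},g)\phi=m_{g\nu}$ stands.
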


\section{$F$-inverse monoids in enriched signature}\label{sect:F-inverse monoids}

In a natural way, $F$-inverse monoids can be equipped with an additional unary operation such that the class of all $F$-inverse monoids forms a variety in this enriched signature. In this context, we introduce in this section a `convenient' term algebra for $F$-inverse monoids.

On every $F$-inverse monoid $S$, consider the unary operation $a\mapsto \mx{a}\ (a\in S)$ and call it the \emph{max-operation} on $S$. From now on we consider every $F$-inverse monoid as an algebraic structure $(S; \cdot, {}^{-1}, {}\mx{}, 1)$ of signature $(2,1,1,0)$ and, as usual, subalgebras, morphisms and congruences of $F$-inverse monoids are understood in this context. For convenience, subalgebras of $F$-inverse monoids are called \emph{$F$-inverse submonoids}. In particular, each group is an $F$-inverse monoid with the max-operation being the identity mapping. Moreover, for any $F$-inverse monoid $S$, the maximum group congruence $\sigma$ is a congruence and the natural morphism $S\to S/\sigma$ is a morphism in this stricter sense.

First we characterize $F$-inverse monoids within algebras of signature $(2,1,1,0)$.

\begin{proposition}[\cite{Kinyon}] \label{prop:conditions}
An algebra $(S; \cdot, {}^{-1}, {}\mx{}, 1)$ of signature $(2,1,1,0)$ is
an $F$-inverse monoid if and only if $(S; \cdot, {}^{-1}, 1)$ is an inverse monoid and, in addition, the following conditions hold:
\begin{enumerate}
\item \label{prop:cond1}
$\mx{a}\geq a$ for all $a\in S$,
\item \label{prop:cond2}
$\mx{a}=\mx{(ae)}$ for all $a\in S$ and $e\in E(S)$.
\end{enumerate}
\end{proposition}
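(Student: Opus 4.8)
The plan is to prove both implications, noting first that conditions (\ref{prop:cond1}) and (\ref{prop:cond2}) are clearly necessary: if $S$ is an $F$-inverse monoid, then by definition $\mx{a}$ is the greatest element of the $\sigma$-class of $a$, so $\mx{a}\ge a$; and for $e\in E(S)$ the elements $a$ and $ae$ lie in the same $\sigma$-class because $\sigma$ collapses all idempotents to $1$, so $\mx{(ae)}=\mx{a}$. The substance is the converse, so let me concentrate on that.

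Assume $(S;\cdot,{}^{-1},1)$ is an inverse monoid satisfying (\ref{prop:cond1}) and (\ref{prop:cond2}). I must show that every $\sigma$-class has a maximum element, and that the unary operation ${}\mx{}$ picks it out. The natural first step is to establish that $S$ is $E$-unitary, equivalently that $\sigma$ is idempotent pure. Suppose $a\,\sigma\,e$ with $e\in E(S)$. Then $a^{-1}a\,\sigma\,a^{-1}e$, and one wants to manipulate $\mx{a}$ using (\ref{prop:cond2}): since $a=a(a^{-1}a)$ with $a^{-1}a\in E(S)$, we get $\mx{a}=\mx{(a\cdot a^{-1}a)}=\mx{a}$ trivially, which is not yet enough. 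Instead, the key computation is to compare $\mx{a}$ and $\mx{e}$: I expect one can show $\mx{e}=1$ for every idempotent $e$, by writing $1=\mx{(1\cdot e)}$... wait, that needs $e\le 1$, which holds, so $1\cdot e=e$ and (\ref{prop:cond2}) with $a=1$ gives $\mx{e}=\mx{(1\cdot e)}=\mx{1}\ge 1$, hence $\mx{1}=1$ forces $\mx{e}=1$. Then if $a\,\sigma\,e$, since $\sigma$ is a congruence $\mx{a}\,\sigma\,\mx{e}=1$, and $\mx{a}\ge a$; combining $\mx{a}\,\sigma\,1$ with the fact that the $\sigma$-class of $1$ in any inverse monoid consists of elements $s$ with $s\,\sigma\,1$, one deduces $a\,\sigma\,1$. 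But to conclude $a\in E(S)$ we still need $E$-unitarity, so this has to be bootstrapped carefully: the cleanest route is probably to first prove directly that $\mx{a}\,\sigma\,a$ (which follows once we know $\mx{a}\ge a$ together with the congruence property applied suitably), then show that if $b\,\sigma\,a$ then $b\le\mx{a}$.

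For the latter, suppose $b\,\sigma\,a$. Then $b^{-1}b\,\sigma\,a^{-1}a\,\sigma\,1$ so... actually the standard fact is that $b\,\sigma\,a$ iff there is an idempotent $f$ with $bf=af$, equivalently $bf\le a$ and $bf\le b$ for a suitable common lower bound. Pick such $f$ with $bf=af$; then using (\ref{prop:cond2}) twice, $\mx{b}=\mx{(bf)}=\mx{(af)}=\mx{a}$, so the unary operation is constant on $\sigma$-classes. Since also $\mx{a}\ge a$ and $\mx{a}\ge b$ for every $b\,\sigma\,a$ (as $\mx{b}=\mx{a}\ge b$), the element $\mx{a}$ is an upper bound for the whole $\sigma$-class. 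It remains to check $\mx{a}$ itself lies in that $\sigma$-class, i.e.\ $\mx{a}\,\sigma\,a$: this should come from $\mx{a}\ge a$, which gives $\mx{a}=ae$ for some $e\in E(S)$, whence $\mx{a}\,\sigma\,a$ because $\sigma$ identifies $e$ with $1$. Once this is in place, $\mx{a}$ is the maximum of its $\sigma$-class and $S$ is $F$-inverse, with the given unary operation being the max-operation; $E$-unitarity then follows for free since every $F$-inverse monoid is $E$-unitary.

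The main obstacle, I expect, is the bookkeeping around which characterization of $\sigma$ to use: the description ``$b\,\sigma\,a$ iff $bf=af$ for some $f\in E(S)$'' is the right tool because it meshes perfectly with hypothesis (\ref{prop:cond2}), but one must be slightly careful that this characterization is available without already assuming $E$-unitarity (it is—it holds in every inverse semigroup). Everything else is short manipulation with the natural partial order and the fact that $\sigma$ is a group congruence collapsing $E(S)$.
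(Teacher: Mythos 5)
Your proof is correct and follows essentially the same route as the paper's: necessity is immediate, and for sufficiency your key steps are exactly the paper's, namely $\mx{a}\mathrel{\sigma}a$ from condition (\ref{prop:cond1}) and, for $a\mathrel{\sigma}b$, choosing $f\in E(S)$ with $af=bf$ so that $\mx{a}=\mx{(af)}=\mx{(bf)}=\mx{b}\geq b$ by (\ref{prop:cond2}) and (\ref{prop:cond1}). The only blemishes are cosmetic: the detour through $E$-unitarity and $\mx{e}=1$ is unnecessary (as you yourself conclude), and $\mx{a}\geq a$ gives $a=\mx{a}e$ rather than $\mx{a}=ae$, though your conclusion $\mx{a}\mathrel{\sigma}a$ stands since the natural partial order is contained in $\sigma$.
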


\begin{proof}
The `only if' part is immediate: in any $F$-inverse monoid $S$, condition \eqref{prop:cond1}
holds by definition, and \eqref{prop:cond2} holds because $a\mathrel{\sigma} ae$ and thus $\mx{a}=\mx{(ae)}$
for all $a\in S$ and $e\in E(S)$.

Conversely, assume that $(S; \cdot, {}^{-1}, {}\mx{}, 1)$ is an algebra of signature $(2,1,1,0)$ such that
$(S; \cdot, {}^{-1}, 1)$ is an inverse monoid and conditions \eqref{prop:cond1} and \eqref{prop:cond2} hold.
We need to show that $\mx{a}$ is the maximum element in the $\sigma$-class $a\sigma$ for any $a\in S$.
Condition \eqref{prop:cond1} implies that $\mx{a} \mathrel{\sigma} a$ for any $a\in S$.
If $a,b\in S$ with $a\mathrel{\sigma} b$
then $ae=be$ for some $e\in E(S)$.
Hence $\mx{a}=\mx{(ae)}=\mx{(be)}=\mx{b}\geq b$ follow by conditions \eqref{prop:cond2} and \eqref{prop:cond1}.
Thus $\mx{a}$ is indeed the maximum element of the $\sigma$-class of $a$.
\end{proof}

The conditions in Proposition \ref{prop:conditions} can be expressed by identities in the signature of $F$-inverse monoids. As a consequence, we get the following.

\begin{corollary}[\cite{Kinyon}] \label{cor:variety}
The class $\bfFA$ of all $F$-inverse monoids forms a variety. It is defined by any identity basis of the variety of inverse
monoids together with the laws
\begin{enumerate}
\item \label{first identity} $\mx{x}x^{-1}x=x$,
\item \label{second identity} $\mx{(xy^{-1}y)}=\mx{x}.$
\end{enumerate}
\end{corollary}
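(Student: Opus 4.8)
\textbf{Proof proposal for Corollary \ref{cor:variety}.}
The plan is to deduce the corollary directly from Proposition \ref{prop:conditions} by translating conditions \eqref{prop:cond1} and \eqref{prop:cond2} into identities in the signature $(2,1,1,0)$, using that the natural partial order of an inverse monoid and the relation "being an idempotent obtained as $y^{-1}y$" are both term-expressible. Recall that in any inverse monoid $a\le b$ holds if and only if $a=ba^{-1}a$ (equivalently $a=be$ for some $e\in E$), and that the idempotents are exactly the elements of the form $y^{-1}y$. So I would first argue that, given an inverse monoid reduct, condition \eqref{prop:cond1} (that $\mx a\ge a$ for all $a$) is equivalent to the identity $\mx x\,x^{-1}x=x$, which is law \eqref{first identity}: indeed $\mx x\,x^{-1}x=x$ says precisely $x\le\mx x$ by the order characterization, and conversely if $x\le\mx x$ then $x=\mx x\,x^{-1}x$.

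Next I would treat condition \eqref{prop:cond2}. One direction is easy: law \eqref{second identity}, $\mx{(xy^{-1}y)}=\mx x$, is an instance of \eqref{prop:cond2} with $e=y^{-1}y$, since every element of that form is idempotent, so \eqref{prop:cond2} implies \eqref{second identity}. For the converse I would show that, in the presence of the inverse monoid axioms, the single identity \eqref{second identity} already forces $\mx{(ae)}=\mx a$ for \emph{every} idempotent $e$: given $e\in E(S)$, set $y:=e$ (or note $e=e^{-1}e$ since $e$ is a self-inverse idempotent), so that $ae=ae^{-1}e=[xy^{-1}y]$ evaluated at $x:=a$, $y:=e$, whence \eqref{second identity} gives $\mx{(ae)}=\mx a$. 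Thus the two displayed laws, together with any identity basis for inverse monoids, axiomatize exactly the class characterized in Proposition \ref{prop:conditions}, which is $\bfFA$; since inverse monoids form a variety and we have only added finitely many further identities, $\bfFA$ is a variety.

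I do not expect a serious obstacle here, as this is essentially a bookkeeping translation; the only point requiring a little care is making sure the chosen identities capture the \emph{full} strength of the "for all $e\in E(S)$" quantifier in \eqref{prop:cond2} rather than some special case of it — and this is handled by the observation above that substituting $y^{-1}y$ for a variable ranges over all idempotents. It is also worth remarking, though not strictly needed, that a concrete finite identity basis for the variety of inverse monoids (in the signature $(2,1,1,0)$ with $\mx{}$ ignored, i.e.\ treating $\mx{}$ as an unconstrained unary symbol) can be taken as the associative law, $x1=1x=x$, $xx^{-1}x=x$, $(x^{-1})^{-1}=x$, $(xy)^{-1}=y^{-1}x^{-1}$, and $xx^{-1}yy^{-1}=yy^{-1}xx^{-1}$, so that together with \eqref{first identity} and \eqref{second identity} one obtains an explicit finite basis for $\bfFA$.
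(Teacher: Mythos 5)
Your proposal is correct and follows exactly the route the paper intends: the paper derives the corollary from Proposition \ref{prop:conditions} by noting that conditions \eqref{prop:cond1} and \eqref{prop:cond2} are expressible as identities, and your translation (using $a\le b \Leftrightarrow a=ba^{-1}a$ for \eqref{first identity}, and the substitution $y:=e=e^{-1}e$ to recover the full quantifier over idempotents for \eqref{second identity}) is precisely the bookkeeping the paper leaves implicit.
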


In the following lemma we record several useful laws satisfied by $F$-inverse monoids.

\begin{lemma} \label{lem:first-properties}
Every $F$-inverse monoid satisfies
the laws
\begin{enumerate}
\item \label{lem:first_prop1}
$(\mx{x})^{-1}=\mx{(x^{-1})}$,
\item \label{lem:first_prop3}
$\mx{x} \mx{y} = \mx{(xy)} (\mx{y})^{-1} \mx{y}=  \mx{x} (\mx{x})^{-1} \mx{(xy)}$,
\item \label{lem:first_prop2}
$\mx{(x_0 \mx{y_1}x_1 \cdots x_{n-1} \mx{y_n}x_n)} = \mx{(x_0y_1x_1\cdots x_{n-1}y_nx_n)}$ for every $n\in \mathbb{N}$.
\end{enumerate}
\end{lemma}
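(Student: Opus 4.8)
The plan is to derive all three laws directly from the identities in Corollary~\ref{cor:variety} together with the standard inverse-monoid calculus, using Proposition~\ref{prop:conditions} to pass freely between the equational and the order-theoretic viewpoint. Throughout I will use that $\mx{a}\sigma a$ (a consequence of condition~(1)) and that two elements are $\sigma$-related iff one can multiply each by a common idempotent to make them equal; the max-element of a $\sigma$-class is its unique largest element with respect to the natural partial order.

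For~\eqref{lem:first_prop1}: since $\mx{x}\geq x$ and the natural partial order is compatible with ${}^{-1}$, we get $(\mx{x})^{-1}\geq x^{-1}$, and since $\mx{x}\sigma x$ we have $(\mx{x})^{-1}\sigma x^{-1}$; hence $(\mx{x})^{-1}$ is an element of the $\sigma$-class of $x^{-1}$ that dominates $x^{-1}$. To conclude it equals $\mx{(x^{-1})}$ I must show it is the \emph{maximum} of that class, i.e.\ that $(\mx{x})^{-1}\geq \mx{(x^{-1})}$; equivalently, after inverting, $\mx{x}\geq (\mx{(x^{-1})})^{-1}$. But applying the first half of the argument to $x^{-1}$ in place of $x$ gives $(\mx{(x^{-1})})^{-1}\sigma x$ and $(\mx{(x^{-1})})^{-1}\geq x$, so $(\mx{(x^{-1})})^{-1}\leq \mx{x}$ by maximality of $\mx{x}$ in $x\sigma$. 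Combining the two inequalities yields equality. (Alternatively this can be done purely equationally from identities~(1) and~(2) of Corollary~\ref{cor:variety}, but the order argument is cleaner.)

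For~\eqref{lem:first_prop3}: I will prove the first equation $\mx{x}\mx{y}=\mx{(xy)}(\mx{y})^{-1}\mx{y}$; the second follows by applying the first to $y^{-1},x^{-1}$ in place of $x,y$, inverting, and using~\eqref{lem:first_prop1}. The key observation is that $\mx{x}\mx{y}\sigma xy$, so $\mx{x}\mx{y}\leq \mx{(xy)}$, which in an inverse monoid means $\mx{x}\mx{y}=\mx{(xy)}\cdot f$ where $f=(\mx{x}\mx{y})^{-1}\mx{x}\mx{y}$ is the right idempotent of $\mx{x}\mx{y}$. It remains to identify this idempotent: $(\mx{x}\mx{y})^{-1}\mx{x}\mx{y}=(\mx{y})^{-1}(\mx{x})^{-1}\mx{x}\mx{y}\leq (\mx{y})^{-1}\mx{y}$, and conversely one checks $(\mx{y})^{-1}\mx{y}\cdot\mx{x}\mx{y}=\mx{x}\mx{y}$ follows once we know — via condition~\eqref{prop:cond1} and the fact that $(\mx{x})^{-1}\mx{x}$ is the domain idempotent of $\mx{x}$, which acts as identity on everything of the form $\mx{x}z$ — that $\mx{x}\mx{y}=\mx{x}((\mx{x})^{-1}\mx{x})\mx{y}$ and hence its right idempotent is exactly $(\mx{y})^{-1}(\mx{x})^{-1}\mx{x}\mx{y}$; multiplying this on the left by $\mx{x}$ and using that $\mx{(xy)}$ restricted to the range of this idempotent recovers $\mx{x}\mx{y}$ gives the claim. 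So the real content is the straightforward identity $\mx{(xy)}\cdot(\mx{y})^{-1}\mx{y}=\mx{x}\mx{y}$, which I verify by showing both sides have the same $\sigma$-image $xy$ and are equal by comparing with $\mx{(xy)}$ in the natural order — indeed $\mx{(xy)}(\mx{y})^{-1}\mx{y}\leq\mx{(xy)}$ and $\geq\mx{x}\mx{y}$ (the last because $\mx{x}\mx{y}\leq\mx{(xy)}$ and $\mx{x}\mx{y}=\mx{x}\mx{y}\cdot(\mx{y})^{-1}\mx{y}$, the latter since $(\mx{y})^{-1}\mx{y}$ is the right idempotent of $\mx{y}$), while both project to $xy$, so equality holds since the $\sigma$-class has a unique element above $\mx{x}\mx{y}$ at that position — more carefully, $\mx{x}\mx{y}\leq\mx{(xy)}(\mx{y})^{-1}\mx{y}\leq\mx{(xy)}$ forces the middle term to equal $\mx{x}\mx{y}$ once we check the outer two determine it, i.e.\ $\mx{(xy)}(\mx{y})^{-1}\mx{y}$ and $\mx{x}\mx{y}$ have the same right idempotent, which is the displayed computation above.

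For~\eqref{lem:first_prop2}: the case $n=1$ with $x_0,x_1=1$ is essentially identity~\eqref{second identity} of Corollary~\ref{cor:variety} after using~\eqref{lem:first_prop1}, namely $\mx{(\mx{y})}=\mx{y}$, combined with the observation that $\mx{y}=\mx{y}\cdot(\mx{y})^{-1}\mx{y}=\mx{y}\cdot\mx{(y^{-1}y)}$-type manipulations let one strip one max at a time. For the general statement I would argue by induction on $n$, but the cleaner route is: each $\mx{y_j}$ satisfies $\mx{y_j}\sigma y_j$, hence the whole product $x_0\mx{y_1}x_1\cdots x_{n-1}\mx{y_n}x_n$ is $\sigma$-related to $x_0y_1x_1\cdots x_{n-1}y_nx_n$; applying $\mx{}$ to $\sigma$-related elements gives the same result, because for $a\sigma b$ one has $\mx{a}=\mx{b}$ by Proposition~\ref{prop:conditions} (as shown in its proof). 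That is the entire argument — the only thing to check is that $\sigma$ is a congruence for the signature including $\mx{}$ and that $\mx{}$ is constant on $\sigma$-classes, both of which are already established. I expect~\eqref{lem:first_prop2} to be the easiest of the three and~\eqref{lem:first_prop3} to be the main obstacle, since it requires carefully tracking right/left idempotents rather than just $\sigma$-classes; the temptation is to hand-wave, but one genuinely needs the identity $\mx{(xy)}(\mx{y})^{-1}\mx{y}=\mx{x}\mx{y}$, proved via the natural partial order as above.
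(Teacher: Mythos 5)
Your treatments of laws \eqref{lem:first_prop1} and \eqref{lem:first_prop2} are correct and agree in substance with the paper (which calls \eqref{lem:first_prop1} immediate and derives \eqref{lem:first_prop2} from the fact that $\sigma$ is a congruence and $\mx{}$ is constant on $\sigma$-classes). The problem is law \eqref{lem:first_prop3}. You correctly reduce it to the identity $\mx{(xy)}(\mx{y})^{-1}\mx{y}=\mx{x}\mx{y}$ and you correctly prove the easy half, namely $\mx{x}\mx{y}\le \mx{(xy)}(\mx{y})^{-1}\mx{y}\le \mx{(xy)}$. But the step that is supposed to close the argument is circular: a sandwich of three $\sigma$-related elements does not force the middle one to equal the bottom one (already in $\BR(G)$ a $\sigma$-class has many elements strictly between a given element and the maximum of its class), and your fallback --- that the outer terms ``determine'' the middle one because $\mx{(xy)}(\mx{y})^{-1}\mx{y}$ and $\mx{x}\mx{y}$ have the same right idempotent --- is precisely what has to be proved. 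The ``displayed computation'' you point to only yields $(\mx{x}\mx{y})^{-1}\mx{x}\mx{y}\le(\mx{y})^{-1}\mx{y}$, which is the wrong direction, and the clause ``using that $\mx{(xy)}$ restricted to the range of this idempotent recovers $\mx{x}\mx{y}$'' assumes the conclusion.

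What is missing is the reverse inequality, which requires invoking maximality a second time: by law \eqref{lem:first_prop1}, $\mx{(xy)}(\mx{y})^{-1}=\mx{(xy)}\,\mx{(y^{-1})}$, and this element is $\sigma$-related to $xyy^{-1}$, hence to $x$, so it lies below the maximum $\mx{x}$ of that $\sigma$-class; multiplying on the right by $\mx{y}$ gives $\mx{(xy)}(\mx{y})^{-1}\mx{y}\le\mx{x}\mx{y}$, which together with your inequality yields the identity (your inversion trick then gives the second equation of \eqref{lem:first_prop3}). This one-line argument is exactly the content the paper imports: it notes that $g\mapsto m_g$ is a premorphism $S/\sigma\to S$ and cites the fact that every premorphism from a group to an inverse monoid is strong, i.e.\ satisfies \eqref{def.strong} (see \cite[Proposition 2.1, Lemma 2.2]{KL}), whose proof is the application of the premorphism inequality to the pair $(xy,\,y^{-1})$ just described. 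So your overall route (direct order-theoretic verification instead of citing strongness) is viable, but as written the proof of \eqref{lem:first_prop3} has a genuine gap at its only nontrivial point.
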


\begin{proof}
Let $S$ be an $F$-inverse monoid and $\psi_S$ the premorphism induced by $S$ (see
Section \ref{subsec:inverse monoids}). Then \eqref{lem:first_prop1} is immediate and  \eqref{lem:first_prop3}  follows
from property
\eqref{def.strong} of the premorphism $\psi_S$.
Finally,  \eqref{lem:first_prop2} follows from the fact that, for any $u_0,\dots,u_n,v_1,\dots,v_n\in S$,
the relations $\mx{v_i} \mathrel{\sigma} v_i\ (i=1,\dots, n)$ imply  $u_0\mx{v_1} \cdots  \mx{v_n}u_n \mathrel{\sigma} u_0v_1\cdots v_nu_n$ since $\sigma$ is a congruence.
\end{proof}

We mention two easy but crucial consequences of these observations. Firstly, it follows from items \eqref{lem:first_prop1}
and \eqref{lem:first_prop2} of Lemma \ref{lem:first-properties} that, in the language of $F$-inverse monoids, we can use terms without $\mx{}$-nested expressions. This allows us to introduce a `convenient' term algebra in this context by considering the free objects of the overvariety of $\mathbf{FI}$ consisting of all algebraic structures $(S;\cdot,{}^{-1},{}\mx{},1)$ where $(S;\cdot,{}^{-1},1)$ is a monoid with involution and the laws (\ref{lem:first_prop1}) and (\ref{lem:first_prop2}) of Lemma \ref{lem:first-properties} are satisfied. More precisely, let $(\Imx_X;\cdot,{}^{-1},{}\mx{},1)$ be the $X$-generated free object in the variety of $(2,1,1,0)$-algebras defined by the following laws  (the arities of the operation symbols $\cdot$, ${}^{-1}$, $\mx{}$, $1$ are, respectively, $2,1,1,0$):
\begin{enumerate}
\item $x\cdot(y\cdot z)=(x\cdot y)\cdot z$, \label{associative}
\item $x\cdot 1= x=1\cdot x$,
\item $(x\cdot y)^{-1}=y^{-1}\cdot x^{-1}$,
\item $(x^{-1})^{-1}=x$,
\item $(\mx{x})^{-1}=\mx{(x^{-1})}$,
\item $\mx{(x\cdot(\mx{y}\cdot z))}=\mx{(x\cdot(y\cdot z))}$. \label{m-nested}
\end{enumerate}

From the discussion so far, it should be obvious that every $F$-inverse monoid in enriched signature satisfies the laws (\ref{associative})--(\ref{m-nested}). Consequently, every $X$-generated $F$-inverse monoid is a quotient of $\Imx_X$. For an $X$-generated $F$-inverse monoid $F$ and $w\in \Imx_X$ we shall denote by $[w]_F$ the image of $w$ under the canonical morphism $\Imx_X\to F$.

Now we discuss some facts concerning the elements of $\Imx_X$.
Because of the associative law (\ref{associative}) we may dissolve all brackets $(\ \cdot\ )$ coming from the application of the binary operation $\cdot$ and express this operation simply by means of concatenation. In particular,  law (\ref{m-nested}) may be written in the more convenient form
\begin{enumerate}
\item[(6*)] $\mx{(x\mx{y} z)}=\mx{(xyz)}$.
\end{enumerate}
Moreover,  for every $n\in \mathbb{N}$, law \eqref{lem:first_prop2} of Lemma \ref{lem:first-properties} is a consequence of the laws (\ref{associative})--(\ref{m-nested}*) above. From this it follows that every element $w$ of $(\Imx_X;\cdot,{}^{-1},{}\mx{},1)$ admits a unique representation
\begin{equation}\label{def:termrepresentation}
w=u_0\mx{v_1}u_1\cdots u_{n-1}\mx{v_n}u_n
\end{equation}
for some $n\in \mathbb{N}_0$, $u_0,\dots,u_n,v_1,\dots, v_n\in \I_X$. In the context of the binary operation $\cdot$, the identity element $1$ will be treated as \textsl{empty symbol}; in particular (some of) the expressions $u_i$ in (\ref{def:termrepresentation}) may be empty. From the laws (1)--(4) one obtains $1=1^{-1}$; however, $1=\mx{1}$ is not a consequence of (1)--(6*) (see below), hence $1\ne \mx{1}$ in $\Imx_X$ and therefore every occurrence of $\mx{1}$ must stay visible in the expression (\ref{def:termrepresentation}). We shall call the elements of $(\Imx_X;\cdot,{}^{-1},{}\mx{},1)$  \emph{terms} and we shall usually assume that terms are represented as in (\ref{def:termrepresentation}). That the representation (\ref{def:termrepresentation}) of terms is indeed unique can be verified by looking at the monoid $(X\sqcup X^{-1}\sqcup \mx{\I_X})^*$ where $\mx{\I_X}$ is the set of symbols $\{\mx{w}: w\in \I_X\}$. That monoid  can be made to an $X$-generated $(2,1,1,0)$-algebra satisfying the laws (\ref{associative})--(\ref{m-nested}*) by (obvious) appropriate definitions of the unary operations $\mx{}$ and ${}^{-1}$. (The details are left to the reader.) This also shows that the law $1=\mx{1}$ is not satisfied in $\Imx_X$.

Secondly, each $F$-inverse monoid $S$ is `positively generated' in the sense that if it is generated, as an inverse monoid, by a subset $A\cup \mx{S}$ then it is generated, as a semigroup, by the same set. Slightly more generally, the following holds.

\begin{lemma}
Each subset of an $F$-inverse monoid $S$ containing $\mx{S}$ and being closed under multiplication is an $F$-inverse submonoid of $S$.
\end{lemma}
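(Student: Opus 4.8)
The plan is to verify that such a subset $T\subseteq S$ --- with $\mx{S}\subseteq T$ and $T$ closed under the binary multiplication --- is closed under the three remaining operations of the enriched signature: the nullary operation $1$, the max-operation ${}\mx{}$, and the inverse operation ${}^{-1}$. The first two are immediate. Since $S$ is $E$-unitary, the $\sigma$-class of $1$ is $E(S)$, whose maximum element with respect to the natural partial order is $1$ itself; hence $1=\mx{1}\in\mx{S}\subseteq T$. And for any $a\in T\subseteq S$ we have $\mx{a}\in\mx{S}\subseteq T$. So the whole content of the statement is that $T$ is closed under inversion, and the idea is to exhibit $a^{-1}$, for $a\in T$, as a product of elements already known to lie in $T$ --- namely a product of $a$ and max-elements.

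The key point is the identity
\[
\mx{(x^{-1})}\,x\,\mx{(x^{-1})}=x^{-1},
\]
which holds in every $F$-inverse monoid. To establish it for a fixed $a\in S$, recall from Lemma~\ref{lem:first-properties}\eqref{lem:first_prop1} that $\mx{(a^{-1})}=(\mx{a})^{-1}$, and use the standard facts about the natural partial order of an inverse semigroup: from $a\le\mx{a}$ one obtains $a^{-1}\le(\mx{a})^{-1}$ and therefore $a^{-1}a\le(\mx{a})^{-1}\mx{a}$ and $aa^{-1}\le\mx{a}(\mx{a})^{-1}$; combining these with the characterizations $a=\mx{a}\,a^{-1}a=aa^{-1}\,\mx{a}$ of $a\le\mx{a}$ gives $(\mx{a})^{-1}a=a^{-1}a$ and $a(\mx{a})^{-1}=aa^{-1}$, whence $(\mx{a})^{-1}a(\mx{a})^{-1}=a^{-1}a(\mx{a})^{-1}=a^{-1}(a(\mx{a})^{-1})=a^{-1}aa^{-1}=a^{-1}$.

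Once this identity is available the proof finishes in one step: for $a\in T$ we have $\mx{(a^{-1})}\in\mx{S}\subseteq T$ and $a\in T$, so $a^{-1}=\mx{(a^{-1})}\,a\,\mx{(a^{-1})}$ is a product of three elements of $T$ and hence lies in $T$ by closure under multiplication. Together with $1\in T$ and closure under the max-operation, this shows $T$ is an $F$-inverse submonoid of $S$. There is no genuine obstacle; the only step needing a little care is the routine manipulation of the natural partial order used to derive the displayed identity, which can alternatively be obtained by checking that $\mx{(x^{-1})}\,x\,\mx{(x^{-1})}=x^{-1}$ is a consequence of the identity basis of $\bfFA$ given in Corollary~\ref{cor:variety}.
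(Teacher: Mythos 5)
Your argument is correct, and its core idea coincides with the paper's: inverses of elements of $T$ can be recovered as products of elements of $T$ and max-elements, so closure under multiplication together with $\mx{S}\subseteq T$ suffices. The packaging differs slightly. The paper applies identity (\ref{first identity}) of Corollary \ref{cor:variety} to get $t=t\,\mx{(t^{-1})}\,t$, concludes that every $t\in T$ is a \emph{regular} element of $T$, invokes the standard fact that a regular subsemigroup of an inverse semigroup is an inverse subsemigroup, and then verifies intrinsically that $\mx{t}$ is still the maximum of the $\sigma_T$-class of $t$. You instead make the inverse explicit, proving the identity $\mx{(x^{-1})}\,x\,\mx{(x^{-1})}=x^{-1}$ from the natural partial order (equivalently, noting that $\mx{(t^{-1})}\,t\,\mx{(t^{-1})}$ is an inverse of $t=t\,\mx{(t^{-1})}\,t$ and using uniqueness of inverses), which gives closure of $T$ under inversion directly; closure under $\mx{}$, the identity element, and the fact that $\bfFA$ is a variety (Corollary \ref{cor:variety}) then immediately make $T$ a subalgebra, i.e.\ an $F$-inverse submonoid, with no separate check of the max-element condition. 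Your route is marginally longer but more self-contained (no appeal to the regular-subsemigroup fact and no intrinsic verification for $\sigma_T$), while the paper's is shorter at the cost of citing these standard facts; your order computation establishing $(\mx{a})^{-1}a=a^{-1}a$ and $a(\mx{a})^{-1}=aa^{-1}$ is sound.
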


\begin{proof}
Let $T$ be such a subset. Identity (\ref{first identity}) of Corollary \ref{cor:variety} implies that for any $t\in T$, $t^{-1}t=\mx{(t^{-1})}t$, hence $t=t\mx{(t^{-1})} t$ so that $t$ is a regular element of $T$. Altogether, $T$ is an inverse subsemigroup of $S$ containing all max-elements of $S$. For every  $t\in T$,  $\mx{t}$ is the maximum element of $t\sigma_S$. Since $t\sigma_T\subseteq t\sigma_S$ it is also the maximum element of $t\sigma_T$ and so $T$ is an $F$-inverse submonoid of $S$.
\end{proof}

\begin{corollary}\label{lem:gen-inv-FinvA}
Let $S$ be an $F$-inverse monoid.
If $A\subseteq S$ is a generating set of $S$ (as an $F$-inverse monoid) then $A\cup \mx{S}$ generates $S$ as a semigroup.
\end{corollary}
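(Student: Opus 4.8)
The plan is to obtain this at once from the preceding lemma. First I would set $T$ to be the subsemigroup of $S$ generated by $A\cup\mx{S}$. By construction $T$ is closed under multiplication and contains $\mx{S}$, so the hypotheses of the preceding lemma are met; hence $T$ is an $F$-inverse submonoid of $S$. Since $A\subseteq A\cup\mx{S}\subseteq T$ and, by assumption, $A$ generates $S$ as an $F$-inverse monoid, the smallest $F$-inverse submonoid of $S$ containing $A$ is all of $S$; as $T$ is such a submonoid, $T=S$. Therefore $A\cup\mx{S}$ generates $S$ as a semigroup, which is the claim.

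The only thing worth checking along the way is that the subsemigroup generated by $A\cup\mx{S}$ is automatically a submonoid, i.e.\ that $1$ lies in it: in a genuine $F$-inverse monoid the $\sigma$-class of $1$ is $E(S)$, whose maximum element is $1$, so $1=\mx{1}\in\mx{S}$. I do not expect any real obstacle here: the entire substance of the statement has been packed into the preceding lemma, whose role is to guarantee that multiplicative closure together with the presence of every max-element already forces closure under the involution and under the max-operation (and the presence of $1$). Once that lemma is available, the corollary is purely formal.
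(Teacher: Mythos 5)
Your argument is correct and is exactly the intended derivation: the paper states this as an immediate corollary of the preceding lemma, namely that the subsemigroup generated by $A\cup \mx{S}$ contains $\mx{S}$ and is multiplicatively closed, hence is an $F$-inverse submonoid containing $A$, and therefore equals $S$. Your side remark that $1=\mx{1}\in\mx{S}$ is a correct (and harmless) extra check.
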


\section{The universal $F$-inverse monoid over an $X$-generated group} \label{sect:F}

\subsection{Motivation}
In this section we introduce an $F$-inverse monoid analogue to the Mar\-go\-lis--Meakin expansion of an $X$-generated group, and establish a universal property of this expansion among $X$-generated $F$-inverse monoids. In order to give some intuitive motivation for the construction, let first an $X$-generated group $G$ and its Margolis--Meakin expansion be considered as machines doing some computation on input words $w\in \I_X$. The result of the computation of $G$ on the input $w$ is the move  from the initial state $1$ to state $[w]_G$ (or from an earlier state $g$ to $g[w]_G$). In contrast, $\M(G)$ not only does that computation (in the second component), but (in the first component), outputs some information on how the first computation has been achieved. (In a sense, the first component records which states have been accessed during the computation and which basic commands have been executed in which states, regardless of their order and the number of possible repetitions.) We are seeking a  device  amenable to more complicated input words, namely to those of the form $u_0\mx{v_1}\cdots \mx{v_n}u_n\in \Imx_X$ and such that the outcome is the `most general $F$-inverse monoid over $G$'. While the interpretation of the input words $u_i$ should be the same (in the second component, move the head from state $g$ to state $g[u_i]_G$, in the first component record all edges traversed), how should inputs of the form $\mx{v_i}$ be interpreted? In any $F$-inverse monoid $F$ with $F/\sigma=G$, for any words $v_1,v_2\in \I_X$ we have $[\mx{v_1}]_F=[\mx{v_2}]_F$ provided that $[v_1]_G=[v_2]_G$. This means that the term $\mx{v}$ cannot contain more information for $F$ than is contained in $[v]_G$. It seems to be reasonable that in our device, in the second component the head moves from the latest state $g$ to $g[v]_G$, while in the first component, we cannot record anything except that we have arrived in state $g[v]_G$. It seems clear that, letting all  terms $w\in \Imx_X$ as possible inputs, in the first component we obtain all possible finite subgraphs of $\Cay(G)$ containing   $1$ and the entry $[w]_G$ of the second component as vertices.

\subsection{The model}
Let $G$ be an $X$-generated group. Consider the set $\F(G)$ of all pairs $(\Gamma,g)$ such that $\Gamma$ is a finite subgraph of $\Cay(G)$ containing  $1$ and $g$ as vertices, and
define a multiplication on $\F(G)$ by the rule
\begin{equation}\label{eq:mult}
(\Gamma,g)(\Gamma',g') = (\Gamma  \cup g\Gamma', gg')
\end{equation}
extending that seen with Margolis--Meakin expansions.

For $g\in G$, let $\Delta_g$ be the subgraph of $\Cay(G)$ which has no edge and whose only vertices are $1$ and $g$.
Notice that $\Delta_1=\Gamma_1$, but it will be convenient for us to use both forms.
In the following statement we collect several properties of $\F(G)$ which are easy to verify.

\begin{proposition}\label{prop:prop1}
Let $G$ be an $X$-generated group.
\begin{enumerate}
\item The algebraic structure $\F(G)$ is an inverse monoid, with identity element
$(\Gamma_1, 1)$ and inverse unary operation $(\Gamma,g)^{-1}=(g^{-1}\Gamma,g^{-1})$ for all $(\Gamma,g)\in \F(G)$.
\item The idempotents of $\F(G)$ are precisely the elements $(\Gamma,1)\in \F(G)$.
Therefore $E(\F(G))$ is isomorphic to the semilattice of all finite subgraphs of $\Cay(G)$ containing $1$ as a vertex.
\item For every $(\Gamma,g),(\Gamma',g')\in \F(G)$, we have $(\Gamma,g)\leq (\Gamma',g')$ if and only if $g=g'$ and
$\Gamma'$ is a subgraph of $\Gamma$.
\item The  projection $ \F(G)\to G$ defined by $(\Gamma, g) \mapsto g$ is an idempotent pure morphism onto the maximum group quotient $G$ of $\F(G)$.
\item Moreover, each of $(\Delta_g,g)\ (g\in G)$ is the maximum element of its $\sigma$-class,
and thus $\F(G)$ is an $F$-inverse monoid with max-operation $\mx{(\Gamma,g)}=(\Delta_g,g)$ for all $(\Gamma,g)\in \F(G)$.
\end{enumerate}
\end{proposition}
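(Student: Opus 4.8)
The plan is to verify the five items of Proposition~\ref{prop:prop1} in turn, closely paralleling the corresponding proof for the Margolis--Meakin expansion (Theorem~\ref{th:MM}), the only novelty being that connectedness of the graph component is no longer required. First I would establish that the multiplication \eqref{eq:mult} is well defined and associative: well-definedness amounts to checking that $\Gamma\cup g\Gamma'$ is again a finite subgraph of $\Cay(G)$ containing $1$ and $gg'$ as vertices, which is immediate since $g\Gamma'$ is a finite subgraph containing $g$ and $gg'$, and $1\in\Gamma$. Associativity reduces to the identity $(\Gamma\cup g\Gamma')\cup gg'\Gamma''=\Gamma\cup g(\Gamma'\cup g'\Gamma'')$, which follows from the fact that left multiplication by $g$ is a semilattice automorphism of the semilattice of finite subgraphs under union (recorded in the Preliminaries). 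The element $(\Gamma_1,1)$ is clearly a two-sided identity.

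Next, for item~(1), I would check that $(\Gamma,g)\mapsto(g^{-1}\Gamma,g^{-1})$ provides inverses: a direct computation gives $(\Gamma,g)(g^{-1}\Gamma,g^{-1})(\Gamma,g)=(\Gamma\cup\Gamma\cup\Gamma,g)=(\Gamma,g)$ and similarly for the other relation, so $\F(G)$ is regular; idempotency of this unary operation on idempotents, together with commutativity of idempotents (see below), then yields that $\F(G)$ is an inverse monoid. For item~(2), the computation $(\Gamma,g)^2=(\Gamma\cup g\Gamma,g^2)=(\Gamma,g)$ forces $g^2=g$, hence $g=1$, and conversely every $(\Gamma,1)$ is idempotent; two such elements $(\Gamma,1),(\Gamma',1)$ multiply to $(\Gamma\cup\Gamma',1)$, so they commute and $E(\F(G))\cong$ the semilattice of finite subgraphs of $\Cay(G)$ through $1$. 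Item~(3) follows by the standard description of the natural partial order: $(\Gamma,g)\le(\Gamma',g')$ iff $(\Gamma,g)=(\Gamma',g')(\Xi,1)$ for some idempotent, i.e.\ iff $g=g'$ and $\Gamma=\Gamma'\cup\Xi$ for some finite subgraph $\Xi$ through $1$, which is exactly the condition that $\Gamma'$ be a subgraph of $\Gamma$. For item~(4), the projection $\pi\colon(\Gamma,g)\mapsto g$ is clearly a surjective monoid morphism compatible with inversion; it is idempotent pure since by item~(2) the preimage of $1$ is exactly $E(\F(G))$; and since $G$ is a group, $\ker\pi$ is a group congruence, whence $\F(G)/\sigma=G$.

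Finally, for item~(5) I would argue that for each $g\in G$ the element $(\Delta_g,g)$ is the maximum of its $\sigma$-class: by item~(4) the $\sigma$-class of $(\Gamma,g)$ consists of all pairs with second coordinate $g$, and by item~(3) the largest such element in the natural partial order is the one whose graph component is smallest, which is $\Delta_g$ (the graph with the two vertices $1,g$ and no edges). Since every $\sigma$-class has a maximum, $\F(G)$ is an $F$-inverse monoid, and the max-operation is $\mx{(\Gamma,g)}=(\Delta_g,g)$. I do not anticipate a serious obstacle here: all five items are verified by short direct computations with the multiplication rule \eqref{eq:mult} and the action of $G$ on subgraphs. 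The only point requiring a little care is to remember, throughout, that $\Delta_g$ need not be connected and that dropping the connectedness hypothesis of the Margolis--Meakin construction does not disturb any of these arguments --- indeed it is precisely what makes every $\Delta_g$ available as a (minimal) graph component, which is what forces each $\sigma$-class to have a maximum. One should also be mildly attentive to the degenerate cases $g=1$ (where $\Delta_1=\Gamma_1$) and $g$ such that $\Delta_g$ has only one vertex, but these cause no difficulty.
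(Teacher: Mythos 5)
Your verification is correct, and since the paper states Proposition \ref{prop:prop1} without proof (``easy to verify''), your direct computations with the multiplication rule \eqref{eq:mult} are exactly the intended routine check, parallel to the Margolis--Meakin and Birget--Rhodes cases, with the key observation rightly being that $\Delta_g$ (possibly disconnected) is the least graph with vertices $1,g$, so each $\sigma$-class has the maximum $(\Delta_g,g)$. The only microscopic slip is in item (3): with a right idempotent factor one gets $\Gamma=\Gamma'\cup g'\Xi$ rather than $\Gamma=\Gamma'\cup\Xi$, but this is harmless since either condition is equivalent to $\Gamma'\subseteq\Gamma$ (or simply use the left-handed form $s=et$ of the natural partial order, which gives your formula verbatim).
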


\begin{remark} \label{MM-Sz-F}
\begin{enumerate}
\item \label{MM-F}
The construction of $\F(G)$ is analogous to that of $\M(G)$, and we have $\M(G)\subseteq \F(G)$.
The latter inclusion is  strict because, for $(\Gamma,g)\in \F(G)$, the subgraph $\Gamma$ of $\Cay(G)$
is not necessarily connected.
Theorem \ref{th:MM}\eqref{th:MM1} implies that $\M(G)$ is the inverse submonoid of $\F(G)$ generated by the set
$\{(\Gamma_x, x_G)\colon x\in X\}$.
\item \label{Sz-F}
The subgraphs of $\Cay(G)$ without edges can be naturally identified with their sets of vertices.
Thus the elements $(\Gamma,g)\in \F(G)$ with $\Gamma$ having no edge form an inverse submonoid
of $\F(G)$ isomorphic to $\BR(G)$, and by Theorem \ref{th:Sz}\eqref{th:Sz1}, it is generated by the set
$\mx{(\F(G))}=\{(\Delta_g,g)\colon g\in G\}$.
\end{enumerate}
\end{remark}

Now we complete the list of the basic properties of $\F(G)$.

\begin{proposition}\label{prop:prop2}
Let $G$ be an $X$-generated group; then $\F(G)$ is an $X$-generated $F$-inverse monoid subject to the mapping $\f_{\F(G)}\colon X\to \F(G)$, $x\mapsto (\Gamma_x,x_G)$.
\end{proposition}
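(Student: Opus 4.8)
The plan is the following. By Proposition~\ref{prop:prop1}, $\F(G)$ is already known to be an $F$-inverse monoid in the enriched signature, so the only thing left to check is that the mapping $\f_{\F(G)}\colon x\mapsto(\Gamma_x,x_G)$ makes it \emph{$X$-generated}; equivalently, that the $F$-inverse submonoid $T$ of $\F(G)$ generated by $A:=\{(\Gamma_x,x_G):x\in X\}$ is all of $\F(G)$. Since $T$ is in particular an inverse submonoid and, by Remark~\ref{MM-Sz-F}\eqref{MM-F}, $A$ generates $\M(G)$ as an inverse submonoid of $\F(G)$, we get $\M(G)\subseteq T$ for free. I would then reduce the problem to idempotents by observing that for any $(\Gamma,g)\in\F(G)$ one has $(\Gamma,g)=(\Gamma,1)(\Delta_g,g)$ (because $1,g$ are vertices of $\Gamma$, so $\Gamma\cup\Delta_g=\Gamma$); hence it suffices to prove that every idempotent $(\Gamma,1)$ and every max-element $(\Delta_g,g)$ lies in $T$.

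For the max-elements: given $g\in G$, pick $w\in\I_X$ with $[w]_G=g$; then $[w]_{\M(G)}=(\Gamma_w,g)\in\M(G)\subseteq T$, and applying the max-operation together with the formula in the last part of Proposition~\ref{prop:prop1} gives $(\Delta_g,g)=\mx{(\Gamma_w,g)}\in T$, so $\mx{\F(G)}\subseteq T$. For the idempotents, the point is to handle a \emph{disconnected} $\Gamma$. Write $\Gamma$ as the union of its finitely many connected components $\Gamma^{(0)},\dots,\Gamma^{(k)}$, with $1\in V(\Gamma^{(0)})$, so that $(\Gamma^{(0)},1)\in\M(G)\subseteq T$. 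For each $i\ge1$ choose a vertex $g_i$ of $\Gamma^{(i)}$; the left translate $g_i^{-1}\Gamma^{(i)}$ is a finite connected subgraph containing $1$, so $(g_i^{-1}\Gamma^{(i)},1)\in\M(G)\subseteq T$. A direct computation with \eqref{eq:mult} and the inverse formula of Proposition~\ref{prop:prop1} then shows
\[(\Delta_{g_i},g_i)\,(g_i^{-1}\Gamma^{(i)},1)\,(\Delta_{g_i},g_i)^{-1}=(\Gamma^{(i)}\cup\{1\},1)\in T .\]
Multiplying $(\Gamma^{(0)},1)$ by these elements for $i=1,\dots,k$ — recalling that idempotents of $\F(G)$ multiply by taking the union of their graphs — yields exactly $(\Gamma,1)$, since the spurious vertex $1$ is absorbed into $\Gamma^{(0)}$. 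Hence $(\Gamma,1)\in T$, and therefore $T=\F(G)$.

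The single genuine obstacle is the disconnected case just discussed: in $\M(G)$ the idempotent semilattice is generated (under union) by the connected subgraphs through $1$, but in $\F(G)$ it is not, so one really must use the max-elements $(\Delta_g,g)$ and the ``translate and conjugate'' manoeuvre to bring each component that misses $1$ into a position where it becomes a connected subgraph through $1$. The remaining verifications are routine manipulations of the rule \eqref{eq:mult}. (Alternatively, one could run the same conjugation trick one edge at a time, translating a single edge to some $\Gamma_y$ with $y\in X$; the component version above just keeps the number of auxiliary factors smaller.)
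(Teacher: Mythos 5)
Your proof is correct and follows essentially the same route as the paper: reduce via $(\Gamma,g)=(\Gamma,1)(\Delta_g,g)$, obtain the max-elements by applying the max-operation to elements of $\M(G)$, and recover each connected component missing the vertex $1$ through the same conjugation computation $(\Delta_v,v)(v^{-1}\Xi,1)(\Delta_v,v)^{-1}=(\Xi\cup\Delta_1,1)$. The only cosmetic difference is that the paper treats all components uniformly rather than singling out the one containing $1$.
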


\begin{proof}
Denote by $F$ the $F$-inverse submonoid of $\F(G)$ generated by $X\f_{\F(G)}$.
Note that $X\f_{\F(G)}=X\f_{\M(G)}$, and so Remark \ref{MM-Sz-F}\eqref{MM-F} implies that $F\supseteq \M(G)$.
Hence $F\supseteq\mx{(\M(G))}= \mx{(\F(G))}$ follows, since every $\sigma$-class of $\F(G)$ contains an element
of $\M(G)$.
To conclude the proof, it suffices to show that each element of $\F(G)$ is a product of elements of
$\M(G)$ and of $\mx{(\F(G))}$.

Let $(\Gamma,g)\in \F(G)$, and
consider a connected component $\Xi$ of $\Gamma$ and a vertex $v$ of $\Xi$.
Then $(\Delta_v, v) (v^{-1}\Xi, 1) (\Delta_{v^{-1}}, v^{-1})=
(\Delta_v \cup \Xi \cup v\Delta_{v^{-1}}, 1) = (\Xi \cup \Delta_1, 1)$.
Since $v^{-1}\Xi$ is a connected subgraph of $\Cay(G)$ which has $1$ as a vertex, we have
$(v^{-1}\Xi, 1)\in \M(G)$, whence it follows that $(\Xi \cup \Delta_1,1)\in F$.
Since $(\Gamma, g)=(\Gamma, 1)(\Delta_g,g)$,
and $(\Gamma, 1)$ is the product of all $(\Xi\cup \Delta_1, 1)$ where $\Xi$ runs through the connected
components of $\Gamma$, we obtain that $(\Gamma,g)\in F$.
\end{proof}

Similarly to the Margolis--Meakin expansion $\M(G)$, also $\F(G)$ depends on $G$ as an $X$-generated group. In order to establish this result we shall make use of a description of Green's relations on $\F(G)$ which is analogous to the one for $\M(G)$ (see \cite[Lemma 3.3]{MM}).
\begin{proposition}\label{Greens-relations-F(G)}
For  $(\Gamma,g),(\Xi,h)\in \F(G)$ the following hold:
\begin{enumerate}
\item \label{R} $(\Gamma,g)\mathrel{\cR}(\Xi,h)$ if and only if $\Gamma=\Xi$,
\item $(\Gamma,g)\mathrel{\cL}(\Xi,h)$ if and only if $g^{-1}\Gamma=h^{-1}\Xi$,
\item $(\Gamma,g)\mathrel{\cD}(\Xi,h)$ if and only if $\Gamma=k\Xi$ for some $k\in G$,
\item $\cJ=\cD$.
\end{enumerate}
\end{proposition}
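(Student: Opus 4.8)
The plan is to verify each of the four statements by mimicking the corresponding proof for $\M(G)$ in \cite[Lemma 3.3]{MM}, adapting where needed because the subgraphs in $\F(G)$ need not be connected. Throughout I would use the description of the natural partial order and of idempotents from Proposition \ref{prop:prop1}, together with the multiplication rule \eqref{eq:mult}.

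For the $\cR$-relation, I would argue that $(\Gamma,g)\mathrel{\cR}(\Xi,h)$ iff $(\Gamma,g)(\Gamma,g)^{-1}=(\Xi,h)(\Xi,h)^{-1}$. Using $(\Gamma,g)^{-1}=(g^{-1}\Gamma,g^{-1})$ one computes $(\Gamma,g)(\Gamma,g)^{-1}=(\Gamma\cup g(g^{-1}\Gamma),1)=(\Gamma,1)$, and similarly $(\Xi,h)(\Xi,h)^{-1}=(\Xi,1)$; since $E(\F(G))$ is (isomorphic to) the semilattice of finite subgraphs containing $1$, these agree iff $\Gamma=\Xi$. The $\cL$-relation is dual: $(\Gamma,g)^{-1}(\Gamma,g)=(g^{-1}\Gamma\cup g^{-1}\Gamma,1)=(g^{-1}\Gamma,1)$, so $(\Gamma,g)\mathrel{\cL}(\Xi,h)$ iff $g^{-1}\Gamma=h^{-1}\Xi$. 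For $\cD$, I would combine the two: $(\Gamma,g)\mathrel{\cD}(\Xi,h)$ iff there is $(\Theta,k)$ with $(\Gamma,g)\mathrel{\cR}(\Theta,k)\mathrel{\cL}(\Xi,h)$, i.e. $\Theta=\Gamma$ and $k^{-1}\Gamma=h^{-1}\Xi$; the latter is solvable in $k$ precisely when $\Gamma$ and $\Xi$ lie in the same $G$-orbit under the action $(k,\Gamma)\mapsto k\Gamma$, that is, $\Gamma=k\Xi$ for some $k\in G$. (One should check the side condition that such a $k$ can be chosen so that $1$ is a vertex of $k^{-1}\Gamma$ and $k$ is a vertex too — but since $\Gamma$ contains $1$ and $\Xi$ contains $1$, from $\Gamma=k\Xi$ we get $k=k\cdot 1\in\Gamma$, and then $(\Gamma,k)\in\F(G)$ does the job.)

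For $\cJ=\cD$, the cleanest route is to show that the partial order on $\F(G)/\cD$ induced by the $\cJ$-preorder is trivial in the relevant direction: I would show that if $J_{(\Gamma,g)}\le J_{(\Xi,h)}$ then $(\Gamma,g)\mathrel{\cD}(\Xi',h')$ for some $(\Xi',h')\le(\Xi,h)$, and that any element below $(\Xi,h)$ is $\cD$-equivalent to one whose graph component is $\Xi$ together with possibly more edges — more precisely, $(\Xi',h')\le(\Xi,h)$ forces $h'=h$ and $\Xi\subseteq\Xi'$, and being a factor of $(\Gamma,g)$ forces $\Xi'$ to be a translate of a subgraph of $\Gamma$. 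Alternatively, and perhaps more transparently, I would note that $(\Gamma,g)\mathrel{\cJ}(\Xi,h)$ means each is a (two-sided) factor of the other, and factoring can only enlarge the graph component up to $G$-translation, so $\Gamma$ and $k\Xi$ are subgraphs of each other for a suitable $k$, whence $\Gamma=k\Xi$ and we are back in the $\cD$-case; here finiteness of the subgraphs is what makes "mutually contained up to translation" collapse to "equal up to translation".

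The main obstacle I anticipate is the $\cJ=\cD$ step, specifically handling the translations carefully: when one writes $(\Gamma,g)=(A,a)(\Xi,h)(B,b)$, the graph component becomes $A\cup a\Xi\cup ah B$, so $\Gamma\supseteq a\Xi$, i.e. $a^{-1}\Gamma\supseteq\Xi$; symmetrically from the other factorization one gets $\Xi\supseteq c^{-1}\Gamma$ for some $c$, and one must check these two inclusions can be made compatible (same translation) using that all graphs are finite and contain $1$. I would isolate this as the one point requiring genuine care; everything else is a routine transcription of the $\M(G)$ arguments, using Proposition \ref{prop:prop1} in place of the corresponding facts about $\M(G)$.
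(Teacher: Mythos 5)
Your overall route is essentially the paper's: items (1) and (2) from the standard characterisations $s\mathrel{\cR}t\Leftrightarrow ss^{-1}=tt^{-1}$, $s\mathrel{\cL}t\Leftrightarrow s^{-1}s=t^{-1}t$; item (3) via $\cD=\cR\circ\cL$ plus the $G$-orbit condition; and $\cJ=\cD$ by extracting from the two factorisations $(\Theta_1,a_1)(\Gamma,g)(\Phi_1,b_1)=(\Xi,h)$ and $(\Theta_2,a_2)(\Xi,h)(\Phi_2,b_2)=(\Gamma,g)$ the inclusions $\Xi\supseteq a_1\Gamma$ and $\Gamma\supseteq a_2\Xi$, whence $\Gamma\supseteq a_2a_1\Gamma$, which finiteness forces to be an equality; this is exactly what the paper's terse ``by use of the finiteness of the graphs'' amounts to, and your second, more transparent, formulation of that step is the right one.

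There is, however, one concrete slip in your converse direction of (3). Given $\Gamma=k\Xi$, the element $(\Gamma,k)$ does \emph{not} do the job: for $(\Gamma,g)\mathrel{\cR}(\Gamma,k)\mathrel{\cL}(\Xi,h)$ you need $k^{-1}\Gamma=h^{-1}\Xi$, which with your $k$ reads $\Xi=h^{-1}\Xi$ and fails in general. The relevant vertex is $h$, not $1$: since $h$ is a vertex of $\Xi$, we have $kh\in k\Xi=\Gamma$, so $(\Gamma,kh)\in\F(G)$ and $(\Gamma,g)\mathrel{\cR}(\Gamma,kh)\mathrel{\cL}(\Xi,h)$, as $(kh)^{-1}\Gamma=h^{-1}k^{-1}\Gamma=h^{-1}\Xi$. (The paper avoids choosing such a witness by using a three-step chain $(\Gamma,g)\mathrel{\cR}(\Gamma,1)\mathrel{\cL}(\Xi,k^{-1})\mathrel{\cR}(\Xi,h)$, where $k^{-1}\in\Xi$ follows from $1\in\Gamma=k\Xi$.) This is a one-line repair, but as written your verification of the ``side condition'' checks membership of the wrong vertex and the claimed intermediate element is not $\cL$-related to $(\Xi,h)$.
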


\begin{proof}
The first two assertions are immediate consequences of the characterisations of $\cR$ and $\cL$ on inverse semigroups ($s\mathrel{\cR}t$ if and only if $ss^{-1}=tt^{-1}$ and $s\mathrel{\cL}t$ if and only if $s^{-1}s=t^{-1}t$). So let us suppose that $(\Gamma,g)\mathrel{\cD}(\Xi,h)$. There exists $(\Theta,c)\in\F(G)$ such that $(\Gamma,g)\mathrel{\cR}(\Theta,c)\mathrel{\cL}(\Xi,h)$, that is, $\Gamma=\Theta$ and $c^{-1}\Gamma=c^{-1}\Theta=h^{-1}\Xi$ so that $\Gamma=ch^{-1}\Xi$. For the converse assume that $\Gamma=k\Xi$ for some $k\in G$. From $1\in \Gamma=k\Xi$ it follows that $k^{-1}\in \Xi$ whence $(\Xi,k^{-1})\in \F(G)$. Altogether, we obtain
$$(\Gamma,g)\mathrel{\cR}(\Gamma,1)\mathrel{\cL}(\Xi,k^{-1})\mathrel{\cR}(\Xi,h).$$
Finally assume that $(\Gamma,g)\mathrel{\cJ}(\Xi,h)$; then, for $i=1,2$ there exist $(\Theta_i,a_i),(\Phi_i,b_i)\in F(G)$ such that $(\Theta_1,a_1)(\Gamma,g)(\Phi_1,b_1)=(\Xi,h)$ and $(\Theta_2,a_2)(\Xi,h)(\Phi_2,b_2)=(\Gamma,g)$. By use of the finiteness of the graphs $\Gamma$ and $\Xi$ one obtains that there exists $k\in G$ such that $\Gamma=k\Xi$ which
guarantees by (3) that $(\Gamma,g)\mathrel{\cD}(\Xi,h)$.
\end{proof}

\begin{proposition}\label{prop:F(G)_depends_on_Cay(G)}
Let $G$ be a group, $X_1,X_2$ be any sets.  For $k=1,2$ let $\f_k\colon  X_k\to G$ be such that $X_k\f_k$ generates $G$ and denote the corresponding $X_k$-generated groups $G_k$. Then $\F(G_1)\cong \F(G_2)$ as inverse monoids if and only if  there exists a bijection $\beta\colon X_1\to X_2$ such that $\Cay(G,\f_1)$ and $\Cay(G,\beta\f_2)$ are isomorphic as $X_1$-labeled \textsl{undirected} graphs.
\end{proposition}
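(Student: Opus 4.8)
The plan is to mirror the proof of Proposition \ref{prop:M(G)dependsonCay(G)} as closely as possible, replacing the use of Green's relations on $\M(G)$ by their analogues on $\F(G)$ established in Proposition \ref{Greens-relations-F(G)}. The converse implication is essentially identical to the one in Proposition \ref{prop:M(G)dependsonCay(G)}: reversing the arrows carrying labels from a subset $B\subseteq X$ does not change the undirected Cayley graph, and the construction \eqref{eq:mult} is insensitive to the direction of the edges in an obvious way, so $\F(G,\f_1)\cong\F(G,\f_2)$ as inverse monoids whenever $\f_1$ and $\f_2$ differ only by inverting the images of some letters; combined with the fact that $X$-generated groups with isomorphic Cayley graphs (as $X$-labeled directed graphs) give isomorphic $\F(G)$'s, this yields the ``if'' direction.

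For the ``only if'' direction, the first task is to recover a bijection $\beta\colon X_1\to X_2$ from an abstract inverse-monoid isomorphism $\phi\colon\F(G_1)\to\F(G_2)$. By Proposition \ref{Greens-relations-F(G)}, $\cJ=\cD$ on $\F(G)$ and $(\Gamma,g)\mathrel{\cJ}(\Xi,h)$ iff $\Gamma=k\Xi$ for some $k\in G$, i.e.\ iff $\Gamma\cong\Xi$ as $X$-labeled subgraphs of $\Cay(G)$ via a translation. The generators $x\f_{\F(G)}=(\Gamma_x,x_G)$ have graph component $\Gamma_x$, which is a single edge (a loop if $x_G=1$, a non-loop otherwise). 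One must identify the $\cJ$-classes of these generators order-theoretically inside $\F(G)/\cJ$: using $J_a\le J_b\iff a\mathrel{\cD}d\le b$ for some $d$ (\cite[Proposition 3.2.8]{Lawson}), the $\cJ$-classes $J_{x\f_{\F(G)}}$ are exactly the maximal elements strictly below $J_1$. Here one must be slightly careful because $\F(G)$ contains more idempotents than $\M(G)$ (disconnected subgraphs), so I would argue that a $\cJ$-class $J_{(\Gamma,g)}$ with $\Gamma$ having a single (geometric) edge is maximal below $J_1$, and conversely that any $\cJ$-class covered by $J_1$ is of this form --- since adding an edge to $\Gamma_x$ (possibly in another component) produces a strictly smaller $\cJ$-class strictly between it and $J_1$. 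As before, the one-edge $\cJ$-classes split into singletons (loop edges, corresponding to letters $z$ with $z_G=1$, where the generator is idempotent) of size $1$ and four-element $\cJ$-classes (non-loop edges, corresponding to letters $y$ with $y_G\ne 1$); note $\vert J_{(\Gamma_x,x_G)}\vert=4$ when $x_G\ne1$ because the orbit of $\Gamma_x$ under translation consists of the two graphs $\left<(1,x)\right>$ and $\left<(x_G^{-1},x)\right>$, each supporting two group components. This lets $\phi$ induce bijections $Z_1\to Z_2$ and $Y_1\to Y_2$, hence $\beta\colon X_1\to X_2$, with $x\f_1\lambda_1\phi\in\{x\beta\,\f_2\lambda_2,(x\beta\,\f_2\lambda_2)^{-1}\}$.

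Once $\beta$ is in hand one normalizes as in Proposition \ref{prop:M(G)dependsonCay(G)}: assume $X_1=X=X_2$, $\beta=\mathrm{id}_X$, split $X=A\sqcup B$ according to whether $x\lambda_1\phi=x\lambda_2$ or $x\lambda_1\phi=(x\lambda_2)^{-1}$, and define $\f_3$ by keeping $\f_2$ on $A$ and inverting it on $B$. Then $\phi$ restricted to the generators gives precisely the identifications \eqref{variablesA}–\eqref{variablesB} (now read in $\F(G)$ rather than $\M(G)$), forcing $(G,\f_1)$ and $(G,\f_3)$ to be isomorphic as $X$-generated groups, hence $\Cay(G,\f_1)\cong\Cay(G,\f_3)$ as $X$-labeled directed graphs; since $\Cay(G,\f_2)$ and $\Cay(G,\f_3)$ differ only by reversing the $B$-labeled arrows, the two Cayley graphs are isomorphic as $X$-labeled undirected graphs, as required. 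The main obstacle I anticipate is the order-theoretic characterization of the generator $\cJ$-classes: the argument for $\M(G)$ used connectedness of the graph components, and in $\F(G)$ one must instead argue directly with single geometric edges, checking both that these $\cJ$-classes are maximal below $J_1$ and that nothing else is --- most of the rest of the proof then transfers essentially verbatim from Proposition \ref{prop:M(G)dependsonCay(G)}.
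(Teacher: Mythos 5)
Your overall plan---recover a bijection $\beta$ from order-theoretic data invariant under an abstract isomorphism, then normalize and finish as in Proposition \ref{prop:M(G)dependsonCay(G)}---is the same as the paper's, and both the converse direction and the final normalization step transfer exactly as you describe. However, the step you yourself flagged as the main obstacle is where your argument genuinely breaks down: in $\F(G)$ the $\cJ$-classes of the non-idempotent generators are \emph{not} maximal below $J_1$, and the $\cJ$-classes covered by $J_1$ are \emph{not} the one-edge classes. Indeed, for $x_G\ne 1$ the edge-free graph $\Delta_{x_G}$ with the two isolated vertices $1$ and $x_G$ is a subgraph of $\Gamma_x$, so $(\Gamma_x,x_G)<(\Delta_{x_G},x_G)$ in the natural order and hence $J_{(\Gamma_x,x_G)}<J_{(\Delta_{x_G},x_G)}<J_1$. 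Thus the non-singleton $\cJ$-classes covered by $J_1$ are the classes $J_{(\Delta_g,g)}$ of the max-elements ($g\ne 1$), whose graphs have no edges at all; your proposed characterization (``single geometric edge'' equals ``covered by $J_1$'') fails in both directions. This is precisely the point at which $\F(G)$ differs from $\M(G)$: since disconnected (in particular edge-free) graphs are allowed, one can \emph{remove} the edge from $\Gamma_x$ and obtain something strictly above it, which could not happen in $\M(G)$.

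The paper's proof repairs exactly this. The loop generators $z\in Z_k$ still give the singleton $\cJ$-classes covered by $J_1$, as in your argument. But the classes of the non-idempotent generators $y\f_{\F(G_k)}$ are characterized one level further down: they are the $\cJ$-classes that are lower neighbours of some maximal $\cJ$-class $J_{(\Delta_g,g)}$ below $J_1$, in which every $\cR$-class has exactly two elements, and which are not below any singleton $\cJ$-class except $J_1$. This uses item (1) of Proposition \ref{Greens-relations-F(G)} (the $\cR$-class of $(\Xi,g)$ is indexed by the vertices of $\Xi$, so the ``three isolated vertices'' neighbour is excluded by $\cR$-class size $3$, and the ``loop plus two vertices'' neighbour is excluded because it lies below a singleton class), together with the fact that an isomorphism maps $\cR$-classes onto $\cR$-classes inside $\cJ$-classes. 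Only after such an invariant characterization does one obtain the bijection $\delta\colon Y_1\to Y_2$, hence $\beta$, with $x\f_{\F(G_1)}$ mapped to $x\beta\f_{\F(G_2)}$ or its inverse; from there your normalization argument does indeed go through verbatim. So the proposal needs its central identification step replaced by (something like) this finer analysis before it constitutes a proof.
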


\begin{proof}
We argue as in the proof of Proposition \ref{prop:M(G)dependsonCay(G)} and decompose $X_k=Y_k\sqcup Z_k$ such that the elements $y\in Y_k$ represent non-idempotent generators while the elements $z\in Z_k$ represent idempotent generators of $\F(G_k)$. Denote the assignment mappings $X_k\to \F(G_k)$ by $\lambda_k$. We shall exploit that every isomorphism $\varphi\colon \F(G_1)\to \F(G_2)$ induces order isomorphisms $(\F(G_1),{\le})\to (\F(G_2),{\le})$ and $\F(G_1)/\cJ\to\F(G_2)/\cJ$. Moreover $\varphi$ maps $\cJ$-classes onto $\cJ$-classes and, within every $\cJ$-class, $\cR$-classes onto $\cR$-classes. We also employ the connection between the $\cJ$-order and the natural order $\le$ mentioned in the proof of Proposition \ref{prop:M(G)dependsonCay(G)}, see \cite[Proposition 3.2.8]{Lawson}.

Let $\varphi\colon  \F(G_1)\to\F(G_2)$ be an isomorphism. For $k=1,2$ the elements $z\lambda_k$ ($z\in Z_k$) are maximal idempotents strictly below $1$ in $\F(G_k)$ and their $\cJ$-classes are singletons. There are no other singleton $\cJ$-classes in $\F(G_1)$ which are lower neighbours of $J_1$, hence $\varphi$ induces a bijection between $Z_1$ and $Z_2$; more precisely, there is a bijection $\gamma\colon Z_1\to Z_2$ such that $J_{z\gamma\lambda_2}=J_{z\lambda_1}\varphi$ for all $z\in Z_1$.

Now let us consider the generators from $Y_k$. The $\cJ$-classes which are maximal below $J_1$ and which are not singletons are the $\cJ$-classes $J_{(\Delta_g,g)}$ of the max-elements $(\Delta_g,g)$ for $g\ne 1$. Since $\varphi$ induces an order isomorphism $\F(G_1)/\cJ\to \F(G_2)/\cJ$ it maps maximal $\cJ$-classes of $\F(G_1)$ (below $J_1$) to those of $\F(G_2)$. We have to sort out those of them which come from generators $y\f_k$ ($y\in Y_k$). Fix a max-element $(\Delta_g,g)$ in $\F(G_1)$ and let $(\Gamma,h)$ be such that $J_{(\Gamma,h)}<J_{(\Delta_g,g)}$. Then $J_{(\Gamma,h)}$ contains an element below $(\Delta_g,g)$, and there is $(\Xi,g)\in \F(G_1)$ such that $(\Gamma,h)\mathrel{\cJ}(\Xi,g)<(\Delta_g,g)$.
We know that $\Xi$ contains the vertices $1$ and $g$. In order that $J_{(\Xi,g)}$ be a lower neighbour of $J_{(\Delta_g,g)}$ there are three options:
\begin{enumerate}
\item\label{thirdvertex}$\Xi$ consists of three isolated vertices $1,g$ and $a$,
\item\label{loop} $\Xi$ consists of  $1$ and $g$ and a loop edge (together with its inverse) attached to one of these vertices,
\item\label{generator} $\Xi$ consists of  $1$ and $g$ and an edge (together with its inverse) connecting these vertices.
\end{enumerate}
From item (\ref{R}) of Proposition \ref{Greens-relations-F(G)} it follows that in the first case the $\cR$-class of $(\Xi,g)$ has three elements, namely $(\Xi,1),(\Xi,g)$ and $(\Xi,a)$. In cases (\ref{loop}) and (\ref{generator}) the $\cR$-class of $(\Xi,g)$ is of size two (consisting of $(\Xi,g)$ and $(\Xi,1)$). In case (\ref{loop}), the $\cJ$-class $J_{(\Xi,g)}$ is below a singleton $\cJ$-class $J_{z\f_1}$ for some $z\in Z_1$ which does not happen in case (\ref{generator}).
Moreover case (\ref{generator}) happens exactly when $g\in \{y\f_1,(y\f_1)^{-1}\}$ for some $y\in Y_1$. Altogether we are able to characterize the  $\cJ$-classes $J=J_{y\f_1}$ with $y\in Y_1$ (that is, the $\cJ$-classes containing the generators $y\f_1$) by the following conditions:
\begin{enumerate}
\item $J$ is a lower neighbour of some maximal $\cJ$-class (below $J_1$),
\item every $\cR$-class in $J$ has size two,
\item $J$ is not below a singleton $\cJ$-class (except $J_1$).
\end{enumerate}
Since the isomorphism $\varphi$ respects these conditions it follows that generator $\cJ$-classes are mapped to generator $\cJ$-classes. More precisely, there is a bijection $\delta\colon Y_1\to Y_2$ which satisfies  $J_{{y\delta\lambda_2}}=J_{{y\lambda_1}}\varphi$ for all $y\in Y_1$. Setting $\beta:=\gamma\cup \delta$  we get the desired bijection $\beta\colon X_1\to X_2$ such that $x\lambda_1\varphi\in \{x\beta\lambda_2,(x\beta\lambda_2)^{-1}\}$ for every $x\in X_1$.
From now on we can proceed in exactly the same way as in the proof of Proposition \ref{prop:M(G)dependsonCay(G)}.
\end{proof}

\subsection{Universal property}
In this subsection we formulate and prove a universal property of $\F(G)$ which is our main result.
Recall that an $X$-generated group $G$ is considered as an $X$-generated $F$-inverse monoid subject to the max-operation $\mx{g}=g$ for all $g\in G$.

We first introduce a generalisation of paths in the Cayley graph $\Cay(G)$ induced by words in $\I_X$; the generalisation should be amenable to terms in $\Imx_X$. For vertices
$g,h$
of $\Cay(G)$ we call a sequence $j=(p_1,\ldots,p_n)$ ($n\ge 1$) of paths in $\Cay(G)$ a
\emph{$(g,h)$-journey} provided that
$\alpha p_1=g$ and $p_n\omega=h$,  and we let $\alpha j:=\alpha p_1$ and $j\omega:=p_n\omega$ be the initial and terminal vertices of the journey $j$; a journey $j$ is \emph{circuit} if $\alpha j=j\omega$. 
In case $n=1$, the journey $(p_1)$ is not distinguished from the path $p_1$.
As for paths, two journeys  are \emph{co-terminal} if they have the same initial and terminal vertices.
Among the paths $p_i$ there may occur also empty paths $\varepsilon_g$ at basepoints $g\in G$. The collection of all journeys in $\Cay(G)$ forms the set of arrows of a (small) category with set of objects $G$ and $(g,h)$-arrows the $(g,h)$-journeys,
the category of paths being a subcategory. 
Given an $(f,g)$-journey $j=(p_1,\dots,p_m)$ and a $(g,h)$-journey $k=(q_1,\dots,q_n)$, the composition is defined by
$$jk=(p_1,\dots,p_mq_1,\dots,q_n)$$
where $p_mq_1$ is the usual composition of paths. (Note that it is essential here to distinguish between $(\dots,p_m,q_1,\dots)$ and $(\dots,p_mq_1,\dots)$!) Every journey can be written as a product of paths and `jumps' of the form $(\eps_g,\eps_h)$ for $g,h\in G$.
By the subgraph $\left<j\right>$ of $\Cay(G)$ \emph{spanned by the journey $j=(p_1,\dots,p_n)$} we mean the graph $\bigcup_{i=1}^n \left<p_i\right>$.

Now we assign to every term $w\in \Imx_X$ and  $g\in G$ a $(g,g[w]_G)$-journey $j_g(w)$ as follows: for $w\in \I_X$ we set
\begin{equation*}
j_g(w):= p_g(w),
\end{equation*}
and
\begin{equation*}
j_g(\mx{w}):=(\eps_g,\eps_{g[w]_G}),
\end{equation*} and for arbitrary $u,v\in \Imx_X$,
\begin{equation*}
j_g(uv):=j_g(u)j_{g[u]_G}(v).
\end{equation*} In compact form, this means that for
\begin{equation*}
w=u_0\mx{v_1}u_1\cdots u_{n-1}\mx{v_n}u_n\in \Imx_X
\end{equation*} and $g\in G$  we have
\begin{equation}\label{def:journey(w)}
j_g(w)=\big(p_g(u_0),p_{g[u_0{v_1}]_G}(u_1),\dots,p_{g[u_0{v_1}\cdots u_{n-1}{v_n}]_G}(u_n)\big).
\end{equation}
The interpretation is that, given the term $u_0\mx{v_1}u_1\cdots u_{n-1}\mx{v_n}u_n$, we start the journey at vertex $g$ by traversing the path labeled $u_0$ which ends at $g[u_0]_G$. Next we read $\mx{v_1}$: this  tells us that we jump from $g[u_0]_G$ to $g[u_0]_G[v_1]_G=g[u_0v_1]_G$ where we continue the journey by following the path labeled $u_1$, and so on. Thereby we assign to every pair $(w,g)\in \Imx_X\times G$  a  journey $j_g(w)$.
From the definition it follows for every $w\in \Imx_X$ that
\begin{equation}\label{def:evaluationF(G)}
[w]_{\F(G)}=(\left< j_1(w)\right>, [w]_G).
\end{equation}

In order to formulate and prove the universal property of $\F(G)$ the following lemma is crucial.

\begin{lemma} \label{lem:journey}
Let $G$ be an $X$-generated group and $F$ be an $X$-generated $F$-inverse monoid for which there is a
canonical
morphism $\nu\colon G\to F/\sigma$.
If $s$ and $t$ are terms which induce co-terminal journeys in $\Cay(G)$ such that $\left<j_1(s)\right>\supseteq \left<j_1(t)\right>$ then $[s]_F\le [t]_F$.
\end{lemma}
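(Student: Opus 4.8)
The plan is to reduce the statement to a single \emph{elementary} situation and then handle that case by hand. Observe first that $[s]_F$ and $[t]_F$ have the same image $[s]_G\nu=[t]_G\nu$ in $F/\sigma$, since $s$ and $t$ induce co-terminal journeys and $\nu$ is canonical; hence the relation $[s]_F\le[t]_F$ makes sense to ask for (both sit in the same $\sigma$-class of $F$). By the term representation \eqref{def:termrepresentation} we may write $s=u_0\mx{v_1}u_1\cdots\mx{v_n}u_n$ and similarly for $t$; using law \eqref{m-nested}* (equivalently, item \eqref{lem:first_prop2} of Lemma \ref{lem:first-properties}) together with item \eqref{lem:first_prop1}, we can assume the letters $u_i,v_j$ are freely reduced words in $\I_X$ without changing the values in $F$ \emph{or} the spanned subgraph $\left<j_1(\cdot)\right>$, since the journey only records the edges traversed. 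The key combinatorial observation is that the partial order on subgraphs generated by the single-step operations ``erase an edge that is an endpoint-leaf'' and ``erase an isolated vertex other than $1$ and the endpoint'' is cofinal: if $\left<j_1(s)\right>\supseteq\left<j_1(t)\right>$ then there is a finite chain of terms $s=s_0,s_1,\dots,s_k=t'$ where $t'$ has the same value in $\F(G)$ as $t$ (i.e.\ $\left<j_1(t')\right>=\left<j_1(t)\right>$ and $[t']_G=[t]_G$) and each $s_{i+1}$ is obtained from $s_i$ by one elementary move. So it suffices to prove: (i) if $s'$ arises from $s$ by an elementary move then $[s]_F\le[s']_F$; and (ii) if $t,t'$ are co-terminal with $\left<j_1(t)\right>=\left<j_1(t')\right>$ then $[t]_F=[t']_F$.

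For (ii): two co-terminal terms spanning the same subgraph differ, up to the rewriting above, by insertions/deletions of \emph{spurs} $ww^{-1}$ and of detours that stay inside the already-recorded subgraph, as well as by replacing a sub-word $v$ by a co-terminal $\mx{v}$; the first kind is handled by the inverse monoid axioms (via Theorem \ref{th:MM}\eqref{th:MM1} one can see $[ww^{-1}]_F$ is an idempotent absorbed by what is already present), and the second uses $[\mx{v_1}]_F=[\mx{v_2}]_F$ whenever $[v_1]_G=[v_2]_G$, which follows since both equal $m_{[v_1]_G}$ in $F$. For (i), the elementary move ``append a spur $y y^{-1}$ at the current endpoint'' replaces $w$ by $wyy^{-1}$, and $[wyy^{-1}]_F=[w]_F[y]_F[y]_F^{-1}\le[w]_F$ because $[y]_F[y]_F^{-1}$ is an idempotent; the move ``append a jump followed by its reverse'', i.e.\ replace $w$ by $w\,\mx{z}\,\mx{z}^{-1}$ for a letter (or word) $z$, gives $[w\,\mx{z}\,\mx{z}^{-1}]_F=[w]_F\,m_{[z]_G}m_{[z]_G}^{-1}\le[w]_F$ since again $m_{[z]_G}m_{[z]_G}^{-1}$ is idempotent; and the move ``add an isolated vertex $a$'' replaces $w$ by $w\,\mx{(w^{-1})}\,\mx{r}\,\mx{r}^{-1}\,\mx{(w)}\,w^{-1}\,w$ type expressions — more cleanly, by first returning the endpoint to $1$ via a jump, inserting a there-and-back jump to $a$, and returning, which again only multiplies $[w]_F$ on the right by an idempotent. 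In every case the new value lies below the old one.

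The main obstacle I expect is (ii), i.e.\ proving that the value in $F$ of a term depends only on the spanned subgraph together with the endpoint, \emph{without} already knowing the universal property we are trying to establish. The cleanest route is to first prove the statement for $F=\F(G)$ itself — there it is exactly \eqref{def:evaluationF(G)} — and then transport it along the canonical morphism $\F(G)\to F$; but that morphism is only constructed \emph{after} this lemma, so that is circular. Instead I would argue directly: show that the set of values $\{[w]_F : \left<j_1(w)\right>=\Gamma,\ [w]_G=g\}$ has a maximum, and that all generating relations among co-terminal same-subgraph terms (spur insertion, detours inside $\Gamma$, replacing $v$ by $\mx{v}$) force \emph{equality} to that maximum; the subtle point is the detour relation, for which one uses that a path $q$ with $\left<q\right>\subseteq\Gamma$ can be absorbed because, after translating the current position to $\alpha q$ by jumps, $[qq^{-1}]_F$ is an idempotent dominated by the idempotent coming from edges of $\Gamma$ already traversed — this requires a small induction on the length of $q$ using law \eqref{first identity} of Corollary \ref{cor:variety} in the form $[q]_F=[q]_F[q]_F^{-1}[q]_F$ and the fact that $\sigma$ is idempotent pure in $F$. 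Once (i) and (ii) are in place, the chain of elementary moves from $s$ to $t'$ together with (ii) relating $t'$ to $t$ yields $[s]_F\le[s_1]_F\le\cdots\le[t']_F=[t]_F$, completing the proof.
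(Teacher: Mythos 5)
Your overall strategy (reduce the containment $\left<j_1(s)\right>\supseteq\left<j_1(t)\right>$ to a chain of elementary one-edge/one-vertex moves, show each move only lowers the value in $F$) is genuinely different from the paper's argument, but as written it has a real gap, and the gap sits exactly where you yourself locate it: step (ii). The claim that any two co-terminal terms spanning the same subgraph are connected by spur insertions, in-graph detours and replacements of a subword $v$ by $\mx{v}$ is a nontrivial combinatorial statement about the kernel of the evaluation $\Imx_X\to\F(G)$, and it is never proved; moreover, even granting that list of moves, showing that an in-graph detour (or a $v\mapsto\mx{v}$ replacement whose edges occur elsewhere in the term) does not change the value in $F$ is not a ``small induction'' layered on top of the lemma --- it \emph{is} the lemma, in the special case of equal spanned subgraphs, and equivalently it is the factorization statement of Theorem \ref{thm:univF}. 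Your proposed repair (``show the set of values $\{[w]_F:\left<j_1(w)\right>=\Gamma,\ [w]_G=g\}$ has a maximum and that the generating relations force equality to it'') restates the goal rather than proving it, and the cofinality claim producing the chain of \emph{terms} $s=s_0,\dots,s_k=t'$ also silently uses (ii), since the intermediate subgraphs must be realized by terms whose values you can compare. The parts you do verify --- appending $yy^{-1}$, appending $\mx{z}\,\mx{(z^{-1})}$, adding an isolated vertex via a there-and-back jump, all multiplying by idempotents --- are correct but are the easy half.

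For contrast, the paper avoids any rewriting machinery by inducting on the length of the \emph{smaller} term $t$ in $(X\sqcup X^{-1}\sqcup\mx{\I_X})^*$: for $t=\mx{w}$ or $t$ a single letter the claim is immediate from maximality of $[\mx{w}]_F$ in its $\sigma$-class, respectively from factoring $s=s_1ts_2$ (or $s=s_1t^{-1}s_2$) at the occurrence of the edge $j_1(t)$ and noting $[s_1]_F,[s_2]_F$ (resp.\ $[s_1t^{-1}]_F,[t^{-1}s_2]_F$) are idempotents; for $t=uv$ one splits $s=s_1s_2$ at a point where $j_1(s)$ passes through the vertex $j_1(u)\omega$, applies the induction hypothesis to the pairs $(ss_2^{-1},u)$ and $(s_1^{-1}s,v)$ (both left-hand journeys span $\left<j_1(s)\right>$), and concludes via the inverse-monoid identity $[s]_F=[ss_2^{-1}]_F[s_1^{-1}s]_F$. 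If you want to salvage your approach, you would have to actually prove the generation statement for the kernel congruence of $\Imx_X\to\F(G)$, which is considerably more work than the paper's three-line inductive step; as it stands, the proposal does not constitute a proof.
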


\begin{proof}
The proof is by induction on the length of $t$, considered as a word in $(X\sqcup X^{-1}\sqcup \mx{\I_X})^*$.
If  $t=\mx{w}\in\mx{\I_X}$ then $[s]_G=[t]_G$ implies $[s]_F\le [t]_F$
(since $F/\sigma$ is a quotient of $G$ and $[\mx{w}]_F$ is the maximum element in its $\sigma$-class).
Hence we obtain the claim also if $t$ is the empty word since the identity $\mx{1}=1$ holds in $F$.

If $t\in X\sqcup X^{-1}$ then the journey $j_1(s)$ contains the edge $e=j_1(t)$ or its inverse.
In the first case, $j_1(s)=jek$ for some journeys $j$ and $k$. If a corresponding factorisation of $s$ is $s=s_1ts_2$ then $[s_1]_G=1=[s_2]_G$
whence $[s_1]_F$ and $[s_2]_F$ are idempotents.
This implies $[s]_F=[s_1]_F[t]_F[s_2]_F\le [t]_F$.
In the second case, $j_1(s)=je^{-1}k$ for some journeys $j$ and $k$. If a corresponding factorisation of $s$ is $s=s_1t^{-1}s_2$ then $[s_1t^{-1}]_G=1=[t^{-1}s_2]_G$ and therefore $[s_1t^{-1}]_F$ and $[t^{-1}s_2]_F$ are idempotents so that $$[s]_F=[s_1t^{-1}tt^{-1}s_2]_F=[s_1t^{-1}]_F[t]_F[t^{-1}s_2]_F\le[t]_F,$$ and the claim is proved for every word $t$ of length at most $1$.

So we may assume that  $t=uv$ with $u,v\in \Imx_X$ such that $u$ and $v$ both have length shorter than $t$. By the induction assumption, the claim is true for all terms $s'$ with $j_1(s')$ co-terminal with $j_1(u)$ such that $\left<j_1(s')\right>\supseteq \left<j_1(u)\right>$ and likewise for $v$.
The journey $j_1(s)$ must meet the vertex $j_1(u)\omega$. Let $s=s_1s_2$ be a factorisation such that $j_1(s_1)\omega=j_1(u)\omega$; then $j_1(ss_2^{-1})$ and $j_1(u)$ as well as $j_1(s_1^{-1}s)$ and $j_1(v)$ are co-terminal (the latter holds since $j_{[u]_G}(s_1^{-1}s)$ and $j_{[u]_G}(v)$ are co-terminal). By the induction assumption, $[ss_2^{-1}]_F\le[u]_F$ and $[s_1^{-1}s]_F\le[v]_F$ which imply
$[s]_F=[ss_2^{-1}]_F[s_1^{-1}s]_F\le [u]_F[v]_F=[uv]_F=[t]_F$.
\end{proof}

\begin{theorem} \label{thm:univF}
Let $G$ be an $X$-generated group and $F$ be an $X$-generated $F$-inverse monoid such that there is a canonical morphism
$\nu\colon G\to F/\sigma$.
Then there is a canonical morphism $\phi\colon \F(G) \to F$ such that the diagram (of canonical morphisms of $X$-generated
$F$-inverse monoids)
\begin{center}
 \begin{tikzcd}
 \F(G) \arrow[r, "\phi"] \arrow[d] & F \arrow[d]\\
 G\arrow[r, "\nu"] & F/\sigma
 \end{tikzcd}
\end{center}
commutes.
\end{theorem}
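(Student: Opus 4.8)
The plan is to construct $\phi$ through the `convenient' term algebra $\Imx_X$ of Section \ref{sect:F-inverse monoids}. Since $\F(G)$ is an $X$-generated $F$-inverse monoid (Propositions \ref{prop:prop1} and \ref{prop:prop2}) and so is $F$, each of them is a quotient of $\Imx_X$ via the canonical morphisms $w\mapsto[w]_{\F(G)}$ and $w\mapsto[w]_F$. Thus the whole task reduces to showing that the kernel congruence of $\Imx_X\to\F(G)$ refines the kernel congruence of $\Imx_X\to F$; that is, for all terms $s,t\in\Imx_X$ one has $[s]_{\F(G)}=[t]_{\F(G)}\Rightarrow[s]_F=[t]_F$. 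Once this is granted, $\phi$ is defined by $[w]_{\F(G)}\mapsto[w]_F$; it is then automatically a morphism of $F$-inverse monoids in the enriched signature, since it is induced by the two quotient maps and hence respects $\cdot,{}^{-1},\mx{},1$ (in particular it carries max-elements to max-elements), and it is canonical because $x_{\F(G)}\mapsto x_F$ for every $x\in X$.

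For the reduction step I would unpack the description \eqref{def:evaluationF(G)} of values in $\F(G)$: $[s]_{\F(G)}=(\left<j_1(s)\right>,[s]_G)$ and $[t]_{\F(G)}=(\left<j_1(t)\right>,[t]_G)$, so that $[s]_{\F(G)}=[t]_{\F(G)}$ says exactly that $[s]_G=[t]_G$ and $\left<j_1(s)\right>=\left<j_1(t)\right>$. In particular the journeys $j_1(s)$ and $j_1(t)$ are co-terminal (both run from $1$ to $[s]_G$), and the equality of the spanned subgraphs gives the two containments $\left<j_1(s)\right>\supseteq\left<j_1(t)\right>$ and $\left<j_1(t)\right>\supseteq\left<j_1(s)\right>$. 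Applying Lemma \ref{lem:journey} to each containment in turn (with the roles of $s$ and $t$ interchanged the second time) yields $[s]_F\le[t]_F$ and $[t]_F\le[s]_F$, hence $[s]_F=[t]_F$ because $\le$ is a partial order. This is the heart of the matter, and essentially all of the real work has already been absorbed into Lemma \ref{lem:journey}; the only point that still wants a moment's care is that well-definedness of $\phi$ genuinely needs the lemma applied in \emph{both} directions, not merely the one-sided inequality.

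It remains to verify commutativity of the square. The left vertical arrow is the canonical morphism $\F(G)\to\F(G)/\sigma$, which under the identification $\F(G)/\sigma=G$ of Proposition \ref{prop:prop1} is the projection $(\Gamma,g)\mapsto g$, and the right vertical arrow is the canonical morphism $F\to F/\sigma$. Both composites $\F(G)\to F/\sigma$ around the square are canonical morphisms of $X$-generated algebras, and there is at most one canonical morphism between any two $X$-generated algebras; hence they coincide and the diagram commutes.

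Once the theorem is established, the consequences advertised in the introduction follow formally by specialising $\nu$: taking $G=\FG_X$, so that a canonical $\nu\colon\FG_X\to F/\sigma$ exists for every $X$-generated $F$-inverse monoid $F$, shows that $\F(\FG_X)$ is the free $X$-generated $F$-inverse monoid; taking $\nu=\mathrm{id}_G$ shows that $\F(G)$ is an initial object among the $X$-generated $F$-inverse monoids $F$ with $F/\sigma=G$; and read functorially the universal property is exactly the statement that $\F$ is left adjoint to $\sigma$.
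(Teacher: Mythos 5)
Your proposal is correct and follows essentially the same route as the paper: evaluate terms in $\F(G)$ via \eqref{def:evaluationF(G)}, use Lemma \ref{lem:journey} (applied with the two terms in both roles, since equality of the spanned subgraphs gives both containments) to show that the kernel of $\Imx_X\to\F(G)$ refines that of $\Imx_X\to F$, and let $\phi$ be the induced map, with commutativity of the square following from uniqueness of canonical morphisms. The paper compresses the two-sided application of the lemma and the routine verification of canonicity into a single sentence, but the argument is the same.
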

\begin{proof}
Let  $u,v\in \Imx_X$ be such that $[u]_{\F(G)}=[v]_{\F(G)}$; by (\ref{def:evaluationF(G)}) $[u]_G=[v]_G$ and the induced journeys $j_1(u)$ and $j_1(v)$ span the same subgraph of $\Cay(G)$. From Lemma \ref{lem:journey} it follows  that $[u]_F=[v]_F$. Consequently, the canonical evaluation morphism $\Imx_X\to F$ given by $w\mapsto [w]_F$ factors through $\F(G)$ which yields the canonical morphism $\phi\colon \F(G)\to F$.
\end{proof}

\begin{remark} \label{F-MM-Sz}
The universal properties of the Margolis--Meakin and Birget--Rhodes expansions
(see Theorems \ref{th:MM}(\ref{th:MM.univ}) and \ref{th:Sz}(\ref{th:Sz.univ}))
are consequences of Theorem \ref{thm:univF}.

Let $S$ be an $X$-generated $E$-unitary inverse monoid and $G$ be an $X$-generated group such that there is a
canonical morphism $\nu\colon G\to S/\sigma$.
Applying the standard embedding of $S$ into its permissible hull $C(S)$ which is $F$-inverse
(see \cite[Theorem 1.4.23 and Proposition 7.1.4]{Lawson}),
consider the inverse submonoid $F$ of $C(S)$ generated by $S\cup \mx{S}$.
Then $F$ is an $X$-generated $F$-inverse monoid with $S$ as its $X$-generated inverse submonoid, and we obtain
by Remark \ref{MM-Sz-F}(\ref{MM-F}) that the canonical morphism $\phi\colon \F(G)\to F$ from 
Theorem \ref{thm:univF} restricted to the $X$-generated inverse subsemigroups of $\F(G)$ and $F$ is a canonical morphism
$\M(G)\to S$ having the required properties.

Turning to the universal property of $\BR(G)$, let $S$ be an inverse monoid which is $F$-inverse, and let $G$ be any group and $\nu\colon G\to S/\sigma$ any morphism.
Observe that since $\BR(G)$ is generated by its max-elements, the morphisms involved in this property are uniquely determined by $\nu$, and have their images in the inverse submonoid $F$ of $S$ generated by $\{m_{g\nu}: g\in G\}$.
Here $F$ is obviously $F$-inverse and $\nu$ restricts to a surjective morphism $G\to F/\sigma$, also denoted $\nu$.
Hence we see that when deducing the universal property of $\BR(G)$, it suffices to consider the approriate morphisms onto $F$ rather than into $S$.
By definition, $F$ is $G$-generated with the assignment mapping defined by $g\f_F=m_{g\nu}$, $G$ is $G$-generated with the identity mapping as assigment mapping and $\nu$ is canonical.
This implies by Remark \ref{MM-Sz-F}(\ref{Sz-F}) that the canonical morphism $\phi\colon\F(G)\to F$ from the previous theorem restricted to the inverse submonoid of $\F(G)$ generated by the max-elements is a morphism $\BR(G)\to F$
satisfying the required conditions.
\end{remark}

\begin{remark} Short and direct syntactic proofs of the universal properties of $\M(G)$ as well as $\BR(G)$ can be given by restricting the arguments of the proofs of Lemma \ref{lem:journey} and Theorem \ref{thm:univF} to terms in $(X\sqcup X^{-1})^*$ (for the case $\M(G)$) or terms in $(\mx{\mathbb I_G})^*$ (for the case $\BR(G)$).
\end{remark}

The following is an immediate consequence of Theorem \ref{thm:univF} and provides a model of the free $X$-generated
$F$-inverse monoid.

\begin{corollary}
The algebraic structure $\F(\FG_X)$ is a free $X$-generated $F$-inverse monoid.
\end{corollary}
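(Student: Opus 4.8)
The plan is to deduce the corollary by specializing Theorem~\ref{thm:univF} to the case $G=\FG_X$ and combining it with the universal property of the free object in a variety. By Corollary~\ref{cor:variety} the class $\bfFA$ of all $F$-inverse monoids (in the enriched signature) is a variety, so it has a free $X$-generated object, which is the quotient $\Imx_X/\theta(\bfFA)$; equivalently, it is an initial object in the category of all $X$-generated $F$-inverse monoids. By Proposition~\ref{prop:prop2}, $\F(\FG_X)$ is an $X$-generated $F$-inverse monoid, so there is a (unique) canonical morphism from the free $X$-generated $F$-inverse monoid onto $\F(\FG_X)$; it remains to produce a canonical morphism in the other direction, which will force both to be isomorphisms.

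First I would let $F$ be an arbitrary $X$-generated $F$-inverse monoid and observe that its maximum group quotient $F/\sigma$ is an $X$-generated group. Since $\FG_X$ is the free $X$-generated group, there is a (unique) canonical morphism $\nu\colon \FG_X\to F/\sigma$. Theorem~\ref{thm:univF} then supplies a canonical morphism $\phi\colon \F(\FG_X)\to F$. Thus $\F(\FG_X)$ maps canonically onto every $X$-generated $F$-inverse monoid, i.e. it is a weakly initial object in that category. Applying this with $F$ taken to be the free $X$-generated $F$-inverse monoid, and combining with the canonical morphism in the opposite direction noted above, we get canonical morphisms both ways between $\F(\FG_X)$ and the free object. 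Since in the category of $X$-generated algebras there is at most one morphism between any two objects, the two composites are the respective identity morphisms, so the two morphisms are mutually inverse isomorphisms. Hence $\F(\FG_X)$ is (isomorphic to) the free $X$-generated $F$-inverse monoid.

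The one point needing a word of care — and the only plausible obstacle — is that Theorem~\ref{thm:univF} is stated for $X$-generated $F$-inverse \emph{monoids} $F$ in the enriched signature, so one must make sure that the ``free object'' being identified is the free object in the \emph{variety} $\bfFA$ of enriched-signature structures, not merely some free inverse monoid; this is exactly why Corollary~\ref{cor:variety} is invoked, and why one takes $\Imx_X/\theta(\bfFA)$ as the model of the free object. There is no real computation here: the argument is a formal ``ping-pong'' between two weakly initial / initial objects in a category with at most one morphism between objects, so uniqueness of canonical morphisms does all the work. I would therefore keep the proof of the corollary to two or three sentences, citing Corollary~\ref{cor:variety}, Proposition~\ref{prop:prop2}, Theorem~\ref{thm:univF}, and the fact (from the Preliminaries) that $\FG_X$ is the free $X$-generated group and that canonical morphisms are unique.
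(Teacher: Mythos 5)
Your argument is correct and is exactly the route the paper intends: the paper simply declares the corollary an immediate consequence of Theorem~\ref{thm:univF}, and your filling-in (the canonical morphism $\FG_X\to F/\sigma$ from freeness of $\FG_X$, Theorem~\ref{thm:univF} giving $\F(\FG_X)\to F$, and the uniqueness of canonical morphisms forcing the two-way maps with the free object of the variety $\bfFA$ to be mutually inverse) is the standard way to make that precise. No gaps; your cautionary remark about working in the enriched signature is apt but already handled by the paper's setup.
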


For a variety $\mathbf H$ of groups let  $\mathbf{Sl}{\malc}\mathbf{H}$ be the class of all $F$-inverse monoids $F$ for which $F/\sigma\in \mathbf{H}$. Readers familiar with the classical definition of the Ma\softl cev product will recognize $\mathbf{Sl}{\malc}\mathbf{H}$ as the Ma\softl cev product (within the class of all $F$-inverse monoids) of the variety of semilattice monoids with the variety $\mathbf{H}$. Then, for any $w\in \I_X$,  $\mathbf{H}\models w=1$ if and only if $\mathbf{Sl}{\malc}\mathbf{H}\models \mx{w}=1$. This follows from the fact that  for an $X$-generated group $G$ and any $w\in\I_X$ we have that $[w]_G=1$ if and only if $[\mx{w}]_{F(G)}=1$. As a consequence, $\mathbf{Sl}{\malc}\mathbf{H}$ is a variety of $F$-inverse monoids and  the free $X$-generated $F$-inverse monoid in $\mathbf{Sl}{\malc}\mathbf{H}$ is $\F(\FH_X)$ where $\FH_X$ is a free $X$-generated member of $\mathbf{H}$. One can generalize this and can consider \emph{presentations} of $F$-inverse monoids, and subject to the appropriate definition one can observe that if $G$ is presented by relations $w_i=1$ ($i\in I$) with $w_i\in \I_X$ then $\F(G)$ is presented by the relations $\mx{w_i}=1$ ($i\in I$).

\subsection{Categorical issues}
For background in category theory the reader is referred to MacLane \cite{MacLane}.
Let $X$ be a set and $\GX$ and $\FX$ be the categories of $X$-generated groups and $X$-generated $F$-inverse monoids, respectively. The assignment
$F\colon \GX\to \FX$ defined by $G\mapsto \F(G)$ gives rise to a functor. Indeed, let $G,H\in \GX$ with $\nu$ being a canonical morphism $\nu\colon G\to H$. Then $g\mapsto g\nu$ and $(g,x)^{\pm 1}\mapsto (g\nu,x)^{\pm 1}$ provide a mapping $\Cay(G)\to \Cay(H)$, denoted $\hat\nu$, which maps (connected) subgraphs (containing $1$) to (connected) subgraphs (containing $1$). (The mapping $\hat\nu$ can be seen as a canonical graph morphism $\Cay(G)\to \Cay(H)$.) Moreover, this mapping commutes with the actions of $G$ and $H$ on their respective Cayley graphs, that is, $(g\Gamma)\hat\nu=(g\nu)(\Gamma\hat\nu)$, so that the mapping $\F(\nu)\colon  \F(G)\to \F(H)$ defined by $(\Gamma,g)\mapsto (\Gamma\hat\nu,g\nu)$ is a canonical morphism making the diagram
\begin{center}
 \begin{tikzcd}
 \F(G) \arrow[r, "\F(\nu)"] \arrow[d] & \F(H) \arrow[d]\\
 G\arrow[r, "\nu"] &H
 \end{tikzcd}
\end{center}
commute.
Morphisms $G\overset{\nu}{\to}H\overset{\mu}{\to} K$ in $\GX$ lift to
$F(G)\overset{F(\nu)}{\to}F(H)\overset{F(\mu)}{\to} F(K)$ in $\FX$ where $\F(\nu)\F(\mu)$ coincides with $\F(\nu\mu)$ since there is only one morphism $\F(G)\to \F(K)$. Hence the following is straightforward.

\begin{proposition}\label{Fexp}
The functor $\F\colon \GX\to \FX$ is an expansion in the sense of Birget and Rhodes \cite{BR}.
\end{proposition}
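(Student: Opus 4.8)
The plan is to verify that $\F$ satisfies the two defining conditions of an expansion in the sense of Birget and Rhodes \cite{BR}: namely, that $\F$ is a functor equipped with a natural transformation to the identity functor whose components are surjective morphisms (in our $X$-generated setting, the canonical projections $\F(G)\to G$), and that $\F$ preserves surjectivity of morphisms. Most of the work has in fact already been carried out in the preceding discussion, so the proof will be short and will mostly assemble facts that are at hand.

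First I would recall that $\F$ has been shown above to be a functor $\GX\to \FX$: the object map is $G\mapsto \F(G)$, the arrow map sends a canonical morphism $\nu\colon G\to H$ to $\F(\nu)\colon (\Gamma,g)\mapsto (\Gamma\hat\nu, g\nu)$, and functoriality ($\F(\nu\mu)=\F(\nu)\F(\mu)$ and $\F(\mathrm{id}_G)=\mathrm{id}_{\F(G)}$) is automatic because $\FX$ has at most one morphism between any two objects. Next, by Proposition \ref{prop:prop1}(4) each $\F(G)$ comes with the idempotent pure canonical projection $\eta_G\colon \F(G)\to G$, $(\Gamma,g)\mapsto g$, onto its maximum group quotient. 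The commuting square displayed just before the statement shows precisely that these projections are the components of a natural transformation $\eta\colon \F\Rightarrow \mathrm{Id}_{\GX}$ along the embedding of $\GX$ into $\FX$ (or, in Birget--Rhodes language, that $\F$ is equipped with a natural map to the identity). Each $\eta_G$ is surjective, being a projection onto the second coordinate.

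Finally I would check the expansion axiom that $\F$ carries surjective morphisms to surjective morphisms. In $\GX$ every morphism is canonical, hence surjective, so I must show that $\F(\nu)$ is surjective for every canonical $\nu\colon G\to H$. But $\F(\nu)$ is itself a canonical morphism of $X$-generated $F$-inverse monoids (as established before the statement), and every canonical morphism of $X$-generated algebraic structures is surjective — this is recorded in Subsection \ref{subsec:inverse monoids}, or one can see it directly since $\F(H)$ is generated by $X\f_{\F(H)}=\{(\Gamma_x,x_H)\colon x\in X\}$ and each $(\Gamma_x,x_H)=(\Gamma_x\hat\nu',\,x_H)$ is the image under $\F(\nu)$ of $(\Gamma_x,x_G)$ (here one uses $\Gamma_x\hat\nu=\Gamma_x$ as labelled graphs and $x_G\nu=x_H$), so $X\f_{\F(H)}\subseteq \mathrm{im}\,\F(\nu)$ and the image, being an $F$-inverse submonoid, is all of $\F(H)$.

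Assembling these three observations — $\F$ is a functor, it admits the natural family of surjections $\eta_G\colon \F(G)\to G$ to the identity functor, and it preserves surjectivity of morphisms — gives exactly the definition of an expansion, completing the proof. I do not anticipate a genuine obstacle here: the statement is essentially a bookkeeping corollary of the constructions and commuting diagrams already developed, and the only point requiring a word of care is spelling out why $\F(\nu)$ is surjective, which follows either from the general fact about canonical morphisms or from the explicit generator computation above.
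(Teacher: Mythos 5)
Your proposal is correct and follows essentially the same route as the paper, which proves the proposition simply by assembling the facts established just beforehand: the construction of $\F(\nu)$, functoriality (automatic since there is at most one morphism between objects of $\FX$), and the naturality of the canonical projections $\F(G)\to G$, which are surjective because canonical morphisms always are. Your extra verification that $\F(\nu)$ is surjective (via generators, or via the general fact about canonical morphisms recorded in the subsection on $X$-generated algebras) is harmless and consistent with the paper's implicit argument.
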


Let $\sigma\colon \FX\to \GX$ be the functor which assigns to every $X$-generated $F$-inverse monoid $F$ its maximum group quotient $F/\sigma$.
Theorem \ref{thm:univF} tells us that for any $G\in \GX$ and $F\in \FX$, if there is a morphism $G\to F/\sigma$ then there is also a morphism $\F(G)\to F$. Since $F(G)/\sigma\cong G$ the functor
$\sigma\circ\F$ is naturally isomorphic to $\mathrm{id}_{\GX}$; again there is at most one morphism between any two objects.
Thus Theorem \ref{thm:univF} can be reformulated in the language of categories as follows.

\begin{corollary}\label{Fleftadjoint}
The functor $\F\colon \GX\to \FX$ is left adjoint to the functor $\sigma\colon  \FX\to \GX$.
\end{corollary}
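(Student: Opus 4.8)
The plan is to unwind the categorical statement into the concrete universal property already established in Theorem~\ref{thm:univF}. Recall that in both $\GX$ and $\FX$ there is at most one morphism between any two objects, so an adjunction here is extremely rigid: to exhibit $\F\dashv\sigma$ it suffices to produce, for each $G\in\GX$ and each $F\in\FX$, a natural bijection
\[
\FX(\F(G),F)\;\cong\;\GX(G,\sigma(F)),
\]
and since both hom-sets have at most one element, this bijection exists precisely when one hom-set is nonempty if and only if the other is. Thus the proof reduces to the biconditional: there is a (canonical) morphism $\F(G)\to F$ if and only if there is a (canonical) morphism $G\to F/\sigma$.

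First I would handle the nontrivial direction. Suppose there is a canonical morphism $\nu\colon G\to F/\sigma$. Then Theorem~\ref{thm:univF} directly produces a canonical morphism $\phi\colon\F(G)\to F$, so $\FX(\F(G),F)\neq\varnothing$. Conversely, suppose there is a canonical morphism $\phi\colon\F(G)\to F$. Applying the functor $\sigma$ gives a morphism $\sigma(\phi)\colon\F(G)/\sigma\to F/\sigma$ in $\GX$; composing with the canonical isomorphism $G\xrightarrow{\sim}\F(G)/\sigma$ coming from Proposition~\ref{prop:prop1}(4) (the projection $(\Gamma,g)\mapsto g$ being idempotent pure onto the maximum group quotient $G$) yields a morphism $G\to F/\sigma$, so $\GX(G,\sigma(F))\neq\varnothing$. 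This establishes the required biconditional, hence the hom-set bijection.

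Next I would address naturality. In principle one must check that the bijection is natural in both variables, i.e.\ compatible with morphisms $G'\to G$ in $\GX$ and $F\to F'$ in $\FX$. Here the rigidity of the categories does the work for us: any diagram of canonical morphisms between $X$-generated groups (or between $X$-generated $F$-inverse monoids) automatically commutes, because there is at most one morphism between any two objects. So the naturality squares commute for free, and no separate verification is needed beyond remarking that the relevant composites live in categories with unique morphisms. For completeness I would also recall, as the excerpt already notes, that $\sigma\circ\F$ is naturally isomorphic to $\mathrm{id}_{\GX}$ via the projections, which is exactly the unit of the adjunction (here an isomorphism), and that the counit $\F(F/\sigma)\to F$ is the canonical morphism supplied by Theorem~\ref{thm:univF} applied to $\nu=\mathrm{id}_{F/\sigma}$.

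The main obstacle, such as it is, lies in being careful about what ``canonical morphism'' requires and in making sure the hom-sets really are at most singletons in the enriched signature $(\cdot,{}^{-1},{}\mx{},1)$ — one must use that canonical morphisms respect the generators and hence are unique and surjective, a point recorded in the Preliminaries. Once that is in hand, there is essentially no computation: the content is entirely carried by Theorem~\ref{thm:univF} in one direction and by functoriality of $\sigma$ together with $\F(G)/\sigma\cong G$ in the other, and naturality is automatic. I would therefore present the argument as a short derivation rather than a lengthy verification, emphasizing that Corollary~\ref{Fleftadjoint} is just the categorical repackaging of the main theorem.
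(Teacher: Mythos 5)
Your argument is correct and matches the paper's own reasoning: both reduce the adjunction to the existence biconditional between morphisms $\F(G)\to F$ and $G\to F/\sigma$, using Theorem~\ref{thm:univF} in one direction, $\F(G)/\sigma\cong G$ in the other, and the fact that these categories have at most one morphism between any two objects to make naturality automatic. Your write-up merely spells out the converse direction and the unit/counit a bit more explicitly than the paper does.
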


The special case $F/\sigma\cong G$ of Theorem \ref{thm:univF} implies the following.

\begin{corollary}\label{Finitial}
For every $X$-generated group $G$, $\F(G)$ is the initial object in the category of all $X$-generated $F$-inverse monoids $F$ for which $F/\sigma\cong G$.
\end{corollary}

\section{The variety of perfect $F$-inverse monoids} \label{sect:perF}

Following the terminology of Jones \cite{Jones} introduced in the wider context of restriction semigroups, an inverse monoid is called {\em perfect} if it is $F$-inverse and $\sigma$ is a perfect congruence on it, meaning that
the set product of any two $\sigma$-classes is again a whole class.
The same class of restriction semigroups was independently introduced and investigated by the second author in \cite{Kud} under the name \emph{ultra $F$-inverse}.
It is easy to see that an $F$-inverse monoid $S$ is perfect if and only if
$\mx{a}\mx{b} = \mx{(ab)}$ for all $a,b\in S$ or, equivalently, if the premorphism $\psi_S$
induced by $S$ is a morphism.
In other words, the class of all perfect $F$-inverse monoids forms a subvariety $\bfPF$ of $\bfFA$ defined by the law $\mx{x}\mx{y}=\mx{(xy)}$ which is equivalent to $\mx{x}\mx{(x^{-1})}=1$. Indeed, the latter follows from the former by substituting ${x^{-1}}$ for $y$ while the latter and  identity (\ref{lem:first_prop3}) of Lemma \ref{lem:first-properties} imply the former.

Our first application of the main result of Section \ref{sect:F} is to describe universal perfect $F$-inverse monoids.
In order to do so it is sufficient to describe the smallest congruence $\theta$ on $\F(G)$ for which $\F(G)/\theta$ is perfect, that is, we are looking for
a description of the fully invariant congruence $\theta(\bfPF)$ corresponding to the law $\mx{x}\mx{(x^{-1})}=1$.

So, let first $\theta$ be any congruence on $\F(G)$ for which $\F(G)/\theta$ is perfect. Let $(\Gamma,g)\in \F(G)$ and $h\in G$. Since $(\Delta_h,h)$ and $(\Delta_{h^{-1}},h^{-1})=(\Delta_h,h)^{-1}$ are max-elements, we get
\begin{equation*}
(\Delta_h,1)=(\Delta_{h},h)(\Delta_{h^{-1}},h^{-1})\mathrel{\theta} 1
\end{equation*}
and hence
\begin{equation}\label{eq:perfect1}
(\Delta_h\cup\Gamma,g)=(\Delta_h,1)(\Gamma,g)\mathrel{\theta}(\Gamma,g).
\end{equation}
So, if $h$ is not a vertex of $\Gamma$ then the effect of multiplying $(\Gamma,g)$ on the left by $(\Delta_h,1)$ is that the vertex $h$ is added to the graph in the first component and that the resulting element is $\theta$-related to the former.
Hence, reading (\ref{eq:perfect1}) forward and backward, we see that, starting with some element of $F(G)$ we get only $\theta$-related elements if we successively add/remove isolated vertices to/from the first component of the element. For a subgraph $\Gamma$ of $\Cay(G)$ denote by $\Ed(\Gamma)$ the subgraph generated by the edges of $\Gamma$. We have obtained
that
\begin{equation}\label{eq:perfect2}
(\Gamma,g)\mathrel{\theta}(\Xi,g) \mbox{ provided that } \Ed(\Gamma)=\Ed(\Xi).
\end{equation}
This enables us to describe the congruence $\theta(\bfPF)$ on $\F(G)$.

\begin{theorem}\label{thm:perfcong}
For every $X$-generated group $G$, the smallest congruence on $\F(G)$ with perfect quotient is given by the relation
\[(\Gamma,g)\mathrel{\theta(\bfPF)}(\Xi,h)\Longleftrightarrow g=h \mbox{ and\/ } \Ed(\Gamma)=\Ed(\Xi).\]
\end{theorem}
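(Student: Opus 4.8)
The plan is to show that the relation defined in the statement, call it $\rho$, equals $\theta(\bfPF)$ by proving two inclusions: $\theta(\bfPF)\subseteq\rho$ and $\rho\subseteq\theta(\bfPF)$. The second inclusion has essentially already been done: equation (\ref{eq:perfect2}) combined with part (4) of Proposition \ref{prop:prop1} (the projection to $G$ is a morphism, so $\theta(\bfPF)$-related elements have equal second component, since $G$ is itself a perfect $F$-inverse monoid and hence $\theta(\bfPF)$ is contained in the kernel of $\F(G)\to G$) gives exactly $\rho\subseteq\theta(\bfPF)$. So the real work is the first inclusion, i.e. verifying that $\rho$ \emph{is} a congruence on $\F(G)$ and that the quotient $\F(G)/\rho$ is perfect; then minimality of $\theta(\bfPF)$ forces $\theta(\bfPF)\subseteq\rho$.

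First I would check that $\rho$ is an equivalence relation (clear) and then that it is compatible with the three operations of the enriched signature. For the unary operations this is easy: $(\Gamma,g)^{-1}=(g^{-1}\Gamma,g^{-1})$ and $\Ed(g^{-1}\Gamma)=g^{-1}\Ed(\Gamma)$, so left translation by $g^{-1}$ preserves the condition; and $\mx{(\Gamma,g)}=(\Delta_g,g)$ depends only on $g$, so it trivially respects $\rho$. For multiplication, suppose $(\Gamma,g)\mathrel\rho(\Gamma',g)$ and $(\Xi,h)\mathrel\rho(\Xi',h)$. Then the products are $(\Gamma\cup g\Xi,gh)$ and $(\Gamma'\cup g\Xi',gh)$, and I must show $\Ed(\Gamma\cup g\Xi)=\Ed(\Gamma'\cup g\Xi')$. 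This follows from the two observations that $\Ed(A\cup B)=\Ed(A)\cup\Ed(B)$ for subgraphs $A,B$ (the edge set of a union is the union of the edge sets) and that $\Ed(g\Xi)=g\,\Ed(\Xi)$; hence $\Ed(\Gamma\cup g\Xi)=\Ed(\Gamma)\cup g\,\Ed(\Xi)=\Ed(\Gamma')\cup g\,\Ed(\Xi')=\Ed(\Gamma'\cup g\Xi')$. So $\rho$ is a congruence. To see that $\F(G)/\rho$ is perfect, I would verify the defining law $\mx{x}\mx{(x^{-1})}=1$, equivalently that $(\Delta_g,g)(\Delta_{g^{-1}},g^{-1})=(\Delta_g\cup g\Delta_{g^{-1}},1)=(\{1,g\}\cup\{g,1\},1)=(\Delta_g,1)$ is $\rho$-related to the identity $(\Gamma_1,1)$: indeed $\Ed(\Delta_g)=\varnothing=\Ed(\Gamma_1)$ and both have second component $1$, so this holds.

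Putting it together: since $\F(G)/\rho$ is a perfect $F$-inverse monoid and $\rho$ is a congruence, minimality of $\theta(\bfPF)$ yields $\theta(\bfPF)\subseteq\rho$; the reverse inclusion was established via (\ref{eq:perfect2}) together with the fact that $\theta(\bfPF)$ is contained in the minimum group congruence $\sigma$ of $\F(G)$ (because the group $G=\F(G)/\sigma$ is perfect, so $\sigma$ itself has perfect quotient, whence $\theta(\bfPF)\subseteq\sigma$, forcing equality of second components). Hence $\theta(\bfPF)=\rho$, which is the assertion. I do not expect a serious obstacle here; the only point requiring mild care is the interplay $\Ed(g\Xi)=g\,\Ed(\Xi)$ and $\Ed(A\cup B)=\Ed(A)\cup\Ed(B)$, which are immediate from the definition of $\Ed$ as the subgraph spanned by the edges, and the bookkeeping that $\theta(\bfPF)$ refines $\sigma$, which follows because any congruence with perfect (in particular group) quotient contains $\sigma$ only after noting $G$ is perfect as an $F$-inverse monoid under the identity max-operation.
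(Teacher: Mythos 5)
Your argument is correct and follows essentially the same route as the paper: verify that the stated relation is a congruence respecting the max-operation (the paper calls this routine, you spell out $\Ed(A\cup B)=\Ed(A)\cup\Ed(B)$ and $\Ed(g\Xi)=g\,\Ed(\Xi)$), check perfectness of the quotient via $(\Delta_g,g)(\Delta_{g^{-1}},g^{-1})=(\Delta_g,1)\mathrel{\rho}1$, and use (\ref{eq:perfect2}) for the containment in every congruence with perfect quotient. The only remark is that your appeal to Proposition \ref{prop:prop1}(4) and to $\theta(\bfPF)\subseteq\sigma$ is superfluous: (\ref{eq:perfect2}) applied to $\theta=\theta(\bfPF)$ already gives $\rho\subseteq\theta(\bfPF)$ on its own.
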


\begin{proof}
It is routine to verify that $\theta(\bfPF)$ is a congruence on $\F(G)$ respecting the max-operation.
By (\ref{eq:perfect2}), $\theta(\bfPF)$ is contained in every congruence with perfect quotient.  Finally, for $g\in G$ we have \[(\Delta_g,g)(\Delta_{g^{-1}},g^{-1})=(\Delta_g,1)\mathrel{\theta(\bfPF)}1\] which implies that $\F(G)/\theta(\bfPF)$ satisfies the law $\mx{x}\mx{(x^{-1})}=1$.
\end{proof}

Observe that $\Ed(\Gamma)$ is the subgraph of $\Gamma$ obtained by removing all isolated vertices of $\Gamma$. This allows us to present a concrete model of the structure $\F(G)/\theta(\bfPF)$.

Let $\Sub_{\Ed}(\Cay(G))$ be the semilattice monoid of \emph{all finite subgraphs of $\Cay(G)$ without isolated vertices} under union and with identity element the empty graph $\varnothing$. The group $G$ acts on $\Sub_{\Ed}(\Cay(G))$ on the left so that we may form the semidirect product $\Sub_{\Ed}(\Cay(G))\rtimes G$ which is an inverse monoid denoted  $P(G)$. We  list some properties of $\P(G)$ analogous to those of $\F(G)$.

\begin{proposition}\label{prop:prop1a}
Let $G$ be an $X$-generated group.
\begin{enumerate}
\item The algebraic structure $\P(G)$ is an inverse monoid with
identity element
$(\varnothing, 1)$, and inverse unary operation $(\Gamma,g)^{-1}=(g^{-1}\Gamma, g^{-1})$ for every $(\Gamma,g)\in \P(G)$.
\item The idempotents of $\P(G)$ are precisely the elements $(\Gamma,1)\in \P(G)$.
Therefore $E(\P(G))$ is isomorphic to the  semilattice monoid
$(\Sub_{\Ed}(\Cay(G));\cup,\varnothing)$.
\item For every $(\Gamma,g),(\Gamma',g')\in \P(G)$, we have $(\Gamma,g)\leq (\Gamma',g')$ if and only if $g=g'$ and
$\Gamma'$ is a subgraph of $\Gamma$.
\item The projection $\P(G)\to G$ defined by $(\Gamma, g) \mapsto g$ is an idempotent pure morphism onto the maximum group quotient $G$ of $P(G)$.
Moreover, each of $(\varnothing,g)\ (g\in G)$ is the maximum element of its $\sigma$-class and thus $\P(G)$ is a perfect $F$-inverse monoid with max-operation $\mx{(\Gamma,g)}=(\varnothing,g)$ for all $(\Gamma,g)\in \P(G)$.
\end{enumerate}
\end{proposition}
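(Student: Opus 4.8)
The plan is to deduce all four items from standard facts about semidirect products, essentially by the same routine computations that underlie Proposition~\ref{prop:prop1} for $\F(G)$. The only genuine preliminary is to check that the ingredients behave well: left translation by an element $g\in G$ is an automorphism of the labeled oriented graph $\Cay(G)$, hence preserves finiteness and sends isolated vertices to isolated vertices, so it maps $\Sub_{\Ed}(\Cay(G))$ into itself, commutes with union, and fixes $\varnothing$; together with $1\Gamma=\Gamma$ and $g(h\Gamma)=(gh)\Gamma$ this gives an action of $G$ on the semilattice monoid $(\Sub_{\Ed}(\Cay(G));\cup,\varnothing)$ by monoid automorphisms. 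Once this is in place, $\P(G)=\Sub_{\Ed}(\Cay(G))\rtimes G$ is a semidirect product of a semilattice monoid by a group acting by automorphisms, which is well known to be an $E$-unitary inverse monoid; this yields item~(1), since $(\varnothing,1)$ is obviously the identity and, as $(\Gamma,g)(g^{-1}\Gamma,g^{-1})=(\Gamma,1)$ is idempotent, idempotents commute, and $(\Gamma,g)(g^{-1}\Gamma,g^{-1})(\Gamma,g)=(\Gamma,g)$, the inverse is $(\Gamma,g)^{-1}=(g^{-1}\Gamma,g^{-1})$.

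For item~(2) I would observe that $(\Gamma,g)$ is idempotent iff $(\Gamma\cup g\Gamma,g^2)=(\Gamma,g)$, which forces $g=1$, while conversely every $(\Gamma,1)$ is idempotent because $\Gamma\cup\Gamma=\Gamma$; then $(\Gamma,1)\mapsto\Gamma$ is the asserted isomorphism $E(\P(G))\to(\Sub_{\Ed}(\Cay(G));\cup,\varnothing)$. For item~(3), using that $s\le t$ iff $s=te$ for some idempotent $e$: $(\Gamma,g)=(\Gamma',g')(\Xi,1)=(\Gamma'\cup g'\Xi,g')$ forces $g=g'$ and $\Gamma'\subseteq\Gamma$, while conversely $g=g'$ and $\Gamma'\subseteq\Gamma$ give $\Xi:=g^{-1}\Gamma\in\Sub_{\Ed}(\Cay(G))$ with $\Gamma'\cup g\Xi=\Gamma$. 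For item~(4), the projection $(\Gamma,g)\mapsto g$ is a surjective morphism onto $G$ by the multiplication rule; its kernel congruence is idempotent pure since by~(2) the preimage of $1$ is exactly $E(\P(G))$, and a short standard argument (as in the proof of Proposition~\ref{prop:prop1}) identifies it with $\sigma$, so $G$ is the maximum group quotient and $\P(G)$ is $E$-unitary; by~(3) the $\sigma$-class of $(\Gamma,g)$ equals $\{(\Xi,g):\Xi\in\Sub_{\Ed}(\Cay(G))\}$ with greatest element $(\varnothing,g)$, so $\P(G)$ is $F$-inverse with $\mx{(\Gamma,g)}=(\varnothing,g)$; finally $\mx{(\Gamma,g)}\,\mx{((\Gamma,g)^{-1})}=(\varnothing,g)(\varnothing,g^{-1})=(\varnothing,1)$ shows that $\P(G)$ satisfies $\mx{x}\mx{(x^{-1})}=1$, i.e.\ that it is perfect.

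A slicker route, avoiding most of these computations, is to use Theorem~\ref{thm:perfcong}: for $\Gamma,\Gamma'\in\Sub_{\Ed}(\Cay(G))$ the graph $\Gamma\cup g\Gamma'$ again has no isolated vertices (a vertex of $\Gamma$ or of $g\Gamma'$ already carries an incident edge), so $\Ed(\Gamma\cup g\Gamma')=\Gamma\cup g\Gamma'$; hence choosing $\Ed(\Gamma)$ as the canonical representative of each $\theta(\bfPF)$-class exhibits $\P(G)$ as $\F(G)/\theta(\bfPF)$, both as an inverse monoid and with the induced max-operation --- which sends the class of $(\Gamma,g)$ to the class of $(\Delta_g,g)$, that is, to $(\varnothing,g)$. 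All four items then follow at once from the corresponding items of Proposition~\ref{prop:prop1} together with perfectness from Theorem~\ref{thm:perfcong}. I expect no real obstacle in either approach; the only points needing a little care are precisely the bits of bookkeeping about isolated vertices --- closure of $\Sub_{\Ed}(\Cay(G))$ under the $G$-action and under the union appearing in the product --- and, on the second route, the observation that the max-operation descends along $\theta(\bfPF)$ and coincides with $(\Gamma,g)\mapsto(\varnothing,g)$.
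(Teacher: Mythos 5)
Your proposal is correct. Note that the paper states this proposition without proof (it is offered as a list of properties "analogous to those of $\F(G)$"), so there is no written argument to compare against; your first route — checking that left translation by $G$ acts by automorphisms on the semilattice monoid $(\Sub_{\Ed}(\Cay(G));\cup,\varnothing)$ and then running the standard semidirect-product computations for the inverse operation, the idempotents, the natural order, and the kernel of the projection — is exactly the routine verification the authors intend, and your computations are complete (including the key point that $\Gamma\cup g\Gamma'$ and $g^{-1}\Gamma$ again lie in $\Sub_{\Ed}(\Cay(G))$, and that the $\sigma$-class of $(\Gamma,g)$ is $\{(\Xi,g)\colon \Xi\in\Sub_{\Ed}(\Cay(G))\}$ with top element $(\varnothing,g)$, whence perfectness via $(\varnothing,g)(\varnothing,g^{-1})=(\varnothing,1)$). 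Your second route, via Theorem \ref{thm:perfcong}, is also sound and in fact mirrors the paper's own remark that $\P(G)$ is a concrete model of $\F(G)/\theta(\bfPF)$; it is not circular, since Theorem \ref{thm:perfcong} is proved independently of Proposition \ref{prop:prop1a}, but be aware that in the paper's logical order the theorem following this proposition (the one exhibiting $(\Gamma,g)\mapsto(\Ed(\Gamma),g)$ as a canonical morphism) presupposes that $\P(G)$ is an $F$-inverse monoid, so the direct route keeps the exposition cleaner; moreover, transporting items (2)--(4) along the bijection $[(\Gamma,g)]_{\theta(\bfPF)}\mapsto(\Ed(\Gamma),g)$ still requires checking that it is a multiplicative bijection onto the semidirect product and how idempotents, the natural order and max-elements pass to the quotient --- small verifications of essentially the same size as the ones your first route carries out explicitly.
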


The perfect $F$-inverse monoid $\P(G)$ is $X$-generated subject to the mapping $X\to P(G)$ given by $x\mapsto (\Gamma_x,x_G)$, and provides a model of $\F(G)/\theta(\bfPF)$.

\begin{theorem}
The mapping $\F(G)\to \P(G)$ defined by $(\Gamma,g)\mapsto (\Ed(\Gamma),g)$ is a canonical morphism of $X$-generated $F$-inverse monoids which induces the congruence $\theta(\bfPF)$. Consequently, $\P(G)$ is universal for all perfect $X$-generated $F$-inverse monoids $F$ for which there is a canonical morphism $\nu\colon G\to F/\sigma$. That is, there exists a canonical morphism $\phi\colon P(G)\to F$ such that the diagram
\begin{center}
 \begin{tikzcd}
 \P(G) \arrow[r, "\phi"] \arrow[d] & F \arrow[d]\\
 G\arrow[r, "\nu"] & F/\sigma
 \end{tikzcd}
\end{center} commutes.
\end{theorem}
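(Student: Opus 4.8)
The plan is to first establish that the map $\pi\colon \F(G)\to \P(G)$, $(\Gamma,g)\mapsto (\Ed(\Gamma),g)$, is a canonical morphism of $X$-generated $F$-inverse monoids, and then to check that it induces precisely the congruence $\theta(\bfPF)$ identified in Theorem \ref{thm:perfcong}; once this is done the universal property will follow formally from Theorem \ref{thm:univF} together with the fact that $\theta(\bfPF)$ is the \emph{smallest} congruence with perfect quotient. So first I would verify that $\pi$ respects multiplication: given $(\Gamma,g),(\Gamma',g')\in\F(G)$, the product in $\F(G)$ is $(\Gamma\cup g\Gamma',gg')$, and we must see that $\Ed(\Gamma\cup g\Gamma')=\Ed(\Gamma)\cup g\,\Ed(\Gamma')=\Ed(\Gamma)\cup\Ed(g\Gamma')$; this is the observation that taking the subgraph spanned by the edges commutes with union and with the left $G$-action on $\Cay(G)$, which is immediate since left translation by $g$ is a graph automorphism and hence sends edges to edges and isolated vertices to isolated vertices. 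Next, $\pi$ clearly sends the identity $(\Gamma_1,1)$ to $(\varnothing,1)$ (as $\Gamma_1$ has no edges), respects ${}^{-1}$ because $\Ed(g^{-1}\Gamma)=g^{-1}\Ed(\Gamma)$, and respects the max-operation since $\mx{(\Gamma,g)}=(\Delta_g,g)$ in $\F(G)$ maps to $(\Ed(\Delta_g),g)=(\varnothing,g)=\mx{(\Ed(\Gamma),g)}$ in $\P(G)$, using $\Ed(\Delta_g)=\varnothing$ and Proposition \ref{prop:prop1a}(5). Finally $\pi$ is canonical because it sends the generator $(\Gamma_x,x_G)$ of $\F(G)$ to $(\Ed(\Gamma_x),x_G)=(\Gamma_x,x_G)$, the designated generator of $\P(G)$ (here $\Gamma_x$ already has no isolated vertex unless $x_G=1$, in which case $\Ed(\Gamma_x)=\varnothing=\Gamma_1$, matching the generator of $\P(G)$ in that degenerate case as well).

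Having $\pi$ as a morphism, I would then identify $\ker\pi$: by definition $(\Gamma,g)\,\ker\pi\,(\Xi,h)$ iff $g=h$ and $\Ed(\Gamma)=\Ed(\Xi)$, which is exactly the relation $\theta(\bfPF)$ of Theorem \ref{thm:perfcong}. Since $\P(G)$ is perfect by Proposition \ref{prop:prop1a}(5), this shows again that $\theta(\bfPF)$ has perfect quotient, and by the minimality established in Theorem \ref{thm:perfcong}, $\P(G)\cong\F(G)/\theta(\bfPF)$ as $X$-generated $F$-inverse monoids. (Alternatively the isomorphism $\F(G)/\theta(\bfPF)\to\P(G)$ can be exhibited directly, and surjectivity of $\pi$ is clear since every finite subgraph without isolated vertices, together with the vertices $1$ and $g$ adjoined, is the $\Ed$-image of something in $\F(G)$ — but one must be slightly careful that a graph $\Gamma'\in\Sub_{\Ed}(\Cay(G))$ need not contain $1$ or $g$, so one takes $(\Gamma'\cup\Delta_g\cup\Delta_1,g)\in\F(G)$ as a preimage of $(\Gamma',g)$.)

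For the universal property, let $F$ be a perfect $X$-generated $F$-inverse monoid with a canonical morphism $\nu\colon G\to F/\sigma$. By Theorem \ref{thm:univF} there is a canonical morphism $\phi_0\colon\F(G)\to F$ making the square with $\nu$ commute. Since $F$ is perfect, the congruence $\ker\phi_0$ on $\F(G)$ has perfect quotient (being a subdirect-type argument: $\F(G)/\ker\phi_0$ embeds in $F$, and the property ``$\mx{x}\mx{(x^{-1})}=1$'' is inherited by $F$-inverse submonoids by Corollary \ref{cor:variety} as it is an identity), hence by Theorem \ref{thm:perfcong} it contains $\theta(\bfPF)$. Therefore $\phi_0$ factors as $\phi_0=\pi\circ\phi$ for a (necessarily canonical) morphism $\phi\colon\P(G)\cong\F(G)/\theta(\bfPF)\to F$, and the required triangle with $\nu$ commutes because it did for $\phi_0$ and $\pi$ sits over $\mathrm{id}_G$.

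The main obstacle I anticipate is not any single hard step but rather the bookkeeping around the degenerate vertices $1$ and $g$: in $\F(G)$ every first component is \emph{required} to contain $1$ and $g$, whereas in $\P(G)$ the first component is an arbitrary finite edge-spanned subgraph that may contain neither. One must check that this mismatch is harmless — that $\pi$ is well-defined into $\P(G)$ (it is, since we drop the vertex data entirely), that it is surjective (handled by adjoining $\Delta_1\cup\Delta_g$ to a preimage as above), and that the multiplication rule on the semidirect product $\Sub_{\Ed}(\Cay(G))\rtimes G$ really is the image of the rule \eqref{eq:mult} under $\pi$. All of this is routine once one keeps in mind that $\Ed(-)$ kills exactly the isolated vertices and commutes with union and with the $G$-action.
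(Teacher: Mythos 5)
Your proposal is correct and takes essentially the same route as the paper: the paper likewise obtains $\P(G)\cong\F(G)/\theta(\bfPF)$ from Theorem \ref{thm:perfcong} together with the remarks preceding the theorem (where you spell out the routine verification that $(\Gamma,g)\mapsto(\Ed(\Gamma),g)$ is a canonical morphism with kernel $\theta(\bfPF)$), and then factors the canonical morphism $\F(G)\to F$ supplied by Theorem \ref{thm:univF} through $\P(G)$ using perfectness of $F$. One small slip in your degenerate-case aside: when $x_G=1$ the graph $\Gamma_x$ is a loop edge at $1$, so $\Ed(\Gamma_x)=\Gamma_x\neq\varnothing$; thus $\Ed(\Gamma_x)=\Gamma_x$ in all cases and canonicity is even more immediate than you suggest.
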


\begin{proof}
The statement about the morphism $\F(G)\to \P(G)$ follows from Theorem \ref{thm:perfcong} and the remarks preceding the statement of the theorem, in particular, $\P(G)\cong \F(G)/\theta(\bfPF)$. Let $F$ be an $X$-generated perfect $F$-inverse monoid for which there exists a canonical morphism $G\to F/\sigma$. Universality of $\F(G)$ implies
that there is a canonical morphism $\F(G)\to F$ (Theorem \ref{thm:univF}).
Since $F$ is perfect, the latter must factor through $\P(G)$ yielding the morphism $P(G)\to F$.
\end{proof}

\begin{corollary}
$\P(\FG_X)$ is a free $X$-generated perfect $F$-inverse monoid.
\end{corollary}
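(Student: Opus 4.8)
The plan is to obtain this as an immediate specialisation of the theorem just proved, taking $G=\FG_X$, the free $X$-generated group. By definition, a free $X$-generated perfect $F$-inverse monoid is an initial object in the category of all $X$-generated perfect $F$-inverse monoids equipped with canonical morphisms, so it is enough to show that $\P(\FG_X)$ is such an initial object.

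First I would note that $\P(\FG_X)$ itself lies in this category: it is a perfect $F$-inverse monoid by Proposition~\ref{prop:prop1a}, and it is $X$-generated via $x\mapsto(\Gamma_x,x_{\FG_X})$ as noted just before the preceding theorem. Now let $F$ be an arbitrary $X$-generated perfect $F$-inverse monoid. Its maximum group quotient $F/\sigma$ is an $X$-generated group, and since $\FG_X$ is free on $X$ among groups it is the initial object of $\GX$; hence there is a canonical morphism $\nu\colon\FG_X\to F/\sigma$. Applying the preceding theorem with $G=\FG_X$ yields a canonical morphism $\phi\colon\P(\FG_X)\to F$ making the relevant diagram commute. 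By the general fact recorded in Section~\ref{sect:Preliminaries} that there is at most one canonical morphism between two $X$-generated algebras, this $\phi$ is the unique morphism of the category from $\P(\FG_X)$ to $F$. Therefore $\P(\FG_X)$ is initial, that is, free.

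There is essentially no obstacle here: every ingredient --- that $\P(G)$ is a perfect $X$-generated $F$-inverse monoid, the universal property in the preceding theorem, the freeness of $\FG_X$ among $X$-generated groups, and the uniqueness of canonical morphisms --- is already available, so the proof is a one-step unwinding of definitions, entirely parallel to the earlier corollary identifying $\F(\FG_X)$ as the free $X$-generated $F$-inverse monoid. The only point worth stating explicitly is the identification of ``free'' with ``initial in the category of $X$-generated perfect $F$-inverse monoids'', which is legitimate since $\bfPF$ is a variety and every $X$-generated member of it is a quotient of the free one.
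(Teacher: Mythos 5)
Your proposal is correct and matches the paper's intent exactly: the corollary is an immediate specialisation of the preceding theorem to $G=\FG_X$, using that $\FG_X$ is initial in $\GX$ so a canonical morphism $\FG_X\to F/\sigma$ always exists, together with the uniqueness of canonical morphisms between $X$-generated algebras. Nothing further is needed.
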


It is also easy to see that the analogues of Proposition \ref{Fexp} and Corollaries \ref{Fleftadjoint} and \ref{Finitial} hold in context of perfect $F$-inverse monoids and with $\F(G)$ replaced by $\P(G)$.

In contrast to $\F(G)$ and $\M(G)$, the expansion $\P(G)$ does not depend on the geometry of the Cayley graph of $G$ as an $X$-generated group. Indeed, every subgraph $\Gamma$ of $\Cay(G)$ without isolated vertices is uniquely determined by $E^+(\Gamma)$, the \textsl{set} of positive edges $(g,x)$ of $\Gamma$. Consequently, the semilattice monoid $\Sub_{\Ed}(\Cay(G))$ is isomorphic to the free semilattice monoid $\FSL_{G\times X}$ generated by $G\times X$. Hence $\P(G_1)\cong \P(G_2)$ as inverse monoids whenever $G_1\cong G_2$ as mere groups and $|X_1|=|X_2|$. The dependency on the geometry of $\Cay(G)$ gets lost in the transition from $\F(G)$ to $\F(G)/\theta(\bfPF)\cong \P(G)$ explained in Theorem \ref{thm:perfcong}. To see \textsl{how} this happens, observe that  $\Sub(\Cay(G))$, the \emph{semilattice  of all finite subgraphs of $\Cay(G)$}  of the $X$-generated group $G$, can be seen as the semilattice monoid freely generated by $G\sqcup \left(G\times X\right)$ subject to the relations
\begin{equation}\label{eq:Sub-relations}
g\vee(h,x)=(h,x)\mbox{ if and only if }g=h\mbox{ or }g=hx_G \quad(g,h\in G,\ x\in X)
\end{equation}
where $\vee$ denotes the semilattice operation.
The relations
(\ref{eq:Sub-relations}) show  the dependency of $\Sub(\Cay(G))$ on the geometry of $\Cay(G)$. The transition $\F(G)\mapsto \F(G)/\theta(\bfPF)$ essentially identifies all isolated vertices in $\Sub(\Cay(G))$ with each other, that is, adds to (\ref{eq:Sub-relations}) the relations
\begin{equation}\label{eq:g=h}
g=h\qquad (g,h\in G)
\end{equation}
so that every vertex acts as an identity on every edge.
The outcome is $\Sub_{\Ed}(\Cay(G))$  which, by (\ref{eq:Sub-relations}) and (\ref{eq:g=h}), is (once more) freely generated by $G\times X$.  As such, this semilattice is  \textsl{geometrically meaningless} and does not depend on the structure of  $\Cay(G)$.

\section{Additional remark}

Recently, Billhardt et al.~\cite{Billhardt_et_al} introduced, for a given $X$-generated group $G$ (subject to
$\f_G\colon X\to G$, $X$ being non-empty), an $E$-unitary inverse semigroup $B=B(G,\f_G)$ (denoted $P$ in \cite{Billhardt_et_al}) which possesses a certain universal property with respect to the permissible hulls $C(S)$ of $X$-generated $E$-unitary inverse semigroups $S$ (subject to $\f_S\colon X\to S$).
Moreover, inside $B$ they were able to identify a copy of the semigroup version $\Msg(G):=\M(G)\setminus\{(\Gamma_1,1)\}$ of the Margolis--Meakin expansion $\M(G)$ and copies of the Birget--Rhodes expansion $\BR(G)$ as inverse subsemigroups. What is more, from the universal property of $B$ they were able to deduce, at once, the characteristic universal properties of $\Msg(G)$ as well as of $\BR(G)$.

It follows from \cite{Billhardt_et_al} that the semigroup $B$ is isomorphic to the inverse subsemigroup $\Q(G)$ of $\P(G)$ generated by the set $\{(\Gamma_x\cup g\Gamma_y,gy_G): x,y\in X,\ g\in G\}$, and consists of all elements $(\Gamma,g)$ of
$\P(G)$ where $1$ and $g$ are vertices of $\Gamma$. Consequently, $\Q(G)$ is an inverse subsemigroup also in $\F(G)$.
The universal property formulated in the main result \cite[Theorem 15]{Billhardt_et_al} can be easily deduced from the universal property of $F(G)$ (by restricting to $\Q(G)$ the morphism $\F(G)\to C(S)$ of $F$-inverse monoids which maps $x\f_{F(G)}$ to the image of $x\f_S$ under the standard embedding $S\to C(S)$, see Remark \ref{F-MM-Sz}).

Finally it should be mentioned that the context is somewhat more general in \cite{Billhardt_et_al}:
instead of requiring that $G$ and $S$ be $X$-generated, it is assumed only that arbitrary mappings $\f_G\colon X\to G$
and $\f_S\colon X\to S$ are fixed.
However, our definitions and results could be adjusted to this generality, as well, and
the adjusted argument of the previous paragraph provides a proof of \cite[Theorem 15]{Billhardt_et_al} in full generality.

\section*{Acknowledgments}

The authors are indebted to Michael Kinyon for his talk at ICS 2018 (\cite{Kinyon}), and are thankful to both him and Peter Jones for the discussions during ICS 2018 and for their valuable input later on.

\end{document}